\newcommand{\bfbeta}{{\boldsymbol\beta}}
\newcommand{\tnorm}[1]{\vert\hspace{-0.3mm}\Vert#1\Vert\hspace{-0.3mm}\vert}
\newcommand{\frakh}{\mathfrak{h}}
\newcommand{\jump}[1]{[\![#1]\!]}
\begin{document}
\markboth{E. Burman}{Robust error estimates in weak norms for
  advection dominated transport}

%
\catchline{}{}{}{}{}
%

\title{Robust error estimates in weak norms for
  advection dominated transport problems with rough data}

\author{Erik Burman}

\address{Department of Mathematics,\\
University College London, Gower Street, London,\\
UK--WC1E  6BT,\\
United Kingdom\\
e.burman@ucl.ac.uk}

\maketitle

\begin{history}
\received{(Day Month Year)}
\revised{(Day Month Year)}
\comby{(xxxxxxxxxx)}
\end{history}

\begin{abstract}
We consider transient
convection--diffusion equations with a velocity vector field with
multiscale character and rough data. We assume that the velocity field
has two scales, a coarse scale with slow spatial variation, which is
responsible for advective transport and a fine scale with small
amplitude that contributes to the mixing. For this problem we consider the
estimation of filtered error quantities for solutions computed using a
finite element method with symmetric stabilization. A posteriori error
estimates and a priori error estimates are derived using the
multiscale decomposition of the advective velocity to improve stability. All estimates are independent both of the
P\'eclet number and of the regularity of the exact solution.
\end{abstract}

\keywords{passive transport; convection--diffusion ; stabilized finite
  element method ; error estimates.}

\ccode{AMS Subject Classification: 65M12, 65M60, 65M20, 65M15}

\section{Introduction}
In spite of much progress in recent years the problem of deriving a
posteriori error estimates and a priori error estimates for transient
 convection--diffusion equations is not fully understood. The difficulty is
 related to the wide range of different problems covered by the
equation class. Indeed depending on the characteristics of the
velocity field and the molecular diffusion the solutions may feature
very
different behaviour. From a computational point of view the complexity will
depend mainly on the smoothness of data and on the P\'eclet number
\[
\mathrm{Pe}_L := \frac{U L}{\mu},
\]
where $U$ is a characteristic velocity, $L$ is a lengthscale and $\mu$
the molecular diffusion.
 If the variations of the
transport velocity are small and the data are smooth, the solution
will be smooth, with moderate Sobolev norm (at least the $H^1$-norm)
independent of the viscosity. Then 
a standard Galerkin method for low P\'eclet number flows and a 
stabilized finite element method for high P\'eclet number flows will
yield accurate results.
Most difficult is the
case of a high P\'eclet number and a strongly varying, or even
turbulent, velocity field, transporting a concentration that is
strongly fragmented and may dilute or concentrate.
In this case a computation can experience strong amplification of
errors due to repeated bifurcation of streamlines and inexact
representation of internal layers caused by spurious oscillations or
numerical diffusion. Sometimes this regime is referred to as scalar turbulence and
LES-models have been derived for the modelization of the passive
scalar using filtering\cite{MCFM03}. Such models encounter a similar Reynolds
stress conundrum as the filtering of the full Navier-Stokes'
equations, and hence the modeling error is difficult to
quantify. 
Another approach that has been attempted for this problem is 
heterogeneous multiscale methods\cite{HO10} however 
in that case the underlying theory is based on homogenization and depending on a periodicity
hypothesis that in most applications will not hold.

In this paper our approach is to apply a stabilized finite
element method to the computation of the solution of the standard physical
model, instead of a coarse grained model. The accuracy of the large
scales is measured by estimating the regularized, or filtered error,
related to estimating the error in local averages of the solution.

 The combination of these
two ingredients allows us to derive error estimates with an order in
$h$, for a norm that is in a certain sense in between $H^{-1}$ and
$L^2$, but which contains the $L^2$-norm of a filtered error. These
estimates are robust in the sense that they do not depend on any high
order Sobolev norm of the exact solution and they only have 
exponential growth depending on the maximum gradient of the coarse
scale velocity field, under a certain scale separation assumption
given below. This means that the filtered quantities
considered are robust under diverging fine scale characteristic trajectories. In some
sense we extract the coarse scales for which we have some (provable) accuracy
from the computation.
\section{The transient advection--diffusion equation}
The problem that we will consider takes the following form.
Let $\Omega$ be an open, convex polygonal/polyhedral subset of $\mathbb{R}^d$,
$d=2,3$ with boundary $\partial \Omega$ and associated outward
pointing normal $n_{\partial \Omega}$.  We will denote the computational time interval by $I:=[0,T]$ and
the space-time domain by $Q:=I \times \Omega$. Assuming homogeneous Dirichlet boundary conditions
$u\vert_{\partial \Omega} = 0$ we may formally write our problem, for $t>0$ find $u \in
H^1_0(\Omega)$ such that $u(x,0) = u_0(x)$ in $\Omega$  and
\begin{equation}\label{modelconvdiff}
\partial_t u + \bfbeta \cdot \nabla u - \mu \Delta u = f, \quad
\mbox{ in } \Omega.
\end{equation}
For the problem data we consider $u_0\in L^2(\Omega)$, $ f \in L^2(Q)$
and let the velocity field
$\bfbeta \in [C_0(0,T;W^{1,\infty}(\Omega))]^d$, such
that $\nabla \cdot \bfbeta = 0$, $\bfbeta\cdot
n_{\partial \Omega}\vert_{\partial \Omega}
= 0$, and the molecular diffusivity $\mu \in \mathbb{R}$ with $\mu
> 0$. The $L^2$-scalar
product over $X$, where $X$ can be either a space domain of $\mathbb{R}^d$ or a 
space-time domain, will be denoted by $(\cdot,\cdot)_X$, the
$L^2$-scalar product over subsets $X$ of $\mathbb{R}^{d-1}$ will be
denoted $\left<\cdot,\cdot\right>_X$.
In both cases the 
corresponding $L^2$-norm is denoted by $\|\cdot\|_X$. We will use the notation $a \lesssim b$ to
denote $a \leq Cb$ with $C$ a moderate constant independent of the
mesh-parameter and the physical parameters of the problem, (except
those that are assumed to be unity). We will also use the notation $a
\sim b$ for $a \lesssim b$ and $b \lesssim a$.

In this paper the analysis will be restricted to velocity fields with
a particular multiscale character. We will assume that
the problem is normalized so that $U := \|\bfbeta\|_{L^{\infty}(Q)} =
1$ and we assume that the characteristic lengthscale is given by
$L:=1$, similarly we assume that $T \sim
L/\|\bfbeta\|_{L^{\infty}(Q)} = 1$. 
Instead of making the standard assumption that
$\|\bfbeta\|_{W^{1,\infty}(\Omega)} $ is small, we assume that there is
a decomposition of the velocity field,
\[
\bfbeta = \overline \bfbeta + \bfbeta',
\]
where, for all $t$, $\|\overline \bfbeta\|_{W^{1,\infty}(\Omega)} \sim 1$ and
$\|\bfbeta'\|_{L^\infty(\Omega)}^2 \sim \mu$. This allows us to define a 
timescale for the flow relating to the coarse scale spatial variation and
the fine scale amplitude,
\begin{equation}\label{chartime}
(\tau_F)^{-1} := \sup_{t \in I} ~ (\|\overline
\bfbeta(t)\|_{W^{1,\infty}(\Omega)}+
\|\bfbeta'(t)\|_{L^\infty(\Omega)}^{2}/\mu) \sim 1.
\end{equation}
Essentially we assume that the velocity vectorfield can be
decomposed in a coarse scale, responsible for transport, that is slowly
varying in space and a fine scale, responsible for mixing,
that has small amplitude but may have strong spatial variation.
Expressed in P\'eclet numbers this means that the coarse scale
P\'eclet number may be arbitrarily high, whereas the fine scale
P\'eclet number, based on $T$, $\bfbeta'$ and $\mu$,
must be $O(1)$. A sharper value of $\tau_F$ given a molecular
diffusion $\mu$ and a velocity field $\bfbeta$ may be obtained by solving a certain minimzation problem
in the $L^\infty$-norm that will be detailed in the a priori analysis.

We will assume that the coarse scale velocity satisfies
a pointwise non-penetration condition on the domain boundary, $\overline
\bfbeta \cdot n_{\partial \Omega} = 0$.

The rough initial data
or source term together with the high
P\'eclet number and the multiscale character of the velocity field may
lead to complex, low regularity solutions. More precisely
solutions are smooth, due to parabolic regularity, but
with large Sobolev norms, rendering standard a priori error
estimates based on approximation theory worthless. Indeed classical
global estimates for stabilized finite element methods for time dependent convection--diffusion
equations\cite{Gue01,BF09,BS11} yield the high P\'eclet number
error estimate:
\begin{equation}\label{hi_Pe_estimate}
\sup_{t \in I} \|(u - u_h)(t)\|_{\Omega} + \|\mu^{\frac12} \nabla (u - u_h)\|_Q \leq C h^{\frac32}  (1 + \mathrm{Pe}_h^{-1/2}) |u|_{L^2(I;H^2(\Omega))}
\end{equation}
where $u_h$ denotes the approximate solution using piecewise affine approximation. Even though
$|u|_{H^2(\Omega)}$ is huge in the presence of layers, stabilized
methods are relevant for this case since one may derive
localized error estimates, showing that perturbations can not spread
too far
upwind of crosswind in the stationary case\cite{JNP84,Guz06,BGL09}, or spread too far
across characteristics in the transient case\cite{Smith13}. The
reason this works is that $|u|_{H^2(\omega)}$ can be assumed to be
small in a part of the domain $\omega \subset \Omega$ provided $u$ has
no layers in a neighbourhood of $\omega$. In the
estimates the bad part can be cut away using a suitably chosen weight function.
This technique is not applicable in the present case, since the strong
oscillations of the velocity field and the nonsmoothness of $u_0$
and $f$, makes it unrealistic to assume that  $|u|_{H^2(\omega)}$ is
small in any part of the domain. The aim of this paper is to show that 
also in this case, stabilized finite element methods
produce an improved solution compared to that of standard Galerkin and
that we can indeed derive error estimates for some large scale
quantities defined by differential filtering.

Drawing from earlier ideas on a posteriori error
estimation\cite{Bu98,HMSW99} we propose to
estimate a regularized error, or in other terms, work in a norm in
between $H^{-1}$ and $L^2$. Indeed let the regularized error $\tilde
e$ be defined by the partial differential equation
\begin{equation}\label{reg_error}
-\frakh \Delta \tilde e + \tilde e = (u-u_h)(\cdot,T),
\end{equation}
with $\tilde e\vert_{\partial \Omega} = 0$, $\frakh \in \mathbb{R}^+$.
We then prove that 
\[
\|\tilde e\|_\frakh := \left(\|\frakh^{\frac12} \nabla \tilde e\|^2_{\Omega} + \|\tilde
e\|^2_{\Omega}\right)^{\frac12} \lesssim C_{T,f,u_0} \left( \frac{h}{\frakh} \right)^{\frac12}.
\]
The
constant $C_{T,f,u_0}$ of the estimates is independent both of the P\'eclet
number and of the regularity of the exact solution. It depends on the
data of the problem and the main time dependence is a factor
\[
\exp\left(c_{\Lambda} \frac{T}{\tau_F}\right)
\]
where $T$ denotes the final time of the simulation and $c_{\Lambda}$ is
  a moderate constant. This factor limits
the applicability of the analysis to time intervals of the size
$\tau_F$ and hence limits the validity of the arguments to smooth
vectorfields $\bfbeta$ or those satisfying the scale
separation case discussed above.

The parameter $\frakh$ can be related to a filter width $\delta$ by 
$\frakh:=\delta^2$, or an artificial diffusivity for elliptic
smoothing. In both cases we see that the hidden constant includes
dimensional
quantities of order unity.
For $\frakh$ constant the results can be
interpreted as $H^{-1}$-norm error estimates and in this norm the
convergence rate $h^{\frac12}$ is most likely optimal.

Below we will first recall the standard $L^2$-norm error analysis for
high P\'eclet number flow and see what is required of data to obtain
an estimate with an order that is fully independent of $\mu$ (that is,
we control suitable Sobolev norms of $u$ a priori).
Then, we consider error estimation of filtered quantities and we obtain a posteriori error
estimates.  A priori estimates for rough
solutions in the general case follow directly using standard
stability estimates of the discrete solution.


The weak formulation of equation \eqref{modelconvdiff} reads, for
$t>0$,  find $u
\in H^1_0(\Omega)$ such that $u(x,0) = u_0(x)$ and
\begin{equation}\label{weakmodel}
(\partial_t u,v)_{\Omega}  + a(u,v) = (f,v)_{\Omega} , \quad \forall v \in H^1_0(\Omega),
\end{equation}
where 
$a(\cdot,\cdot)$ is defined by:
\[
a(u,v) := (\beta \cdot \nabla u,v)_{\Omega} + (\mu \nabla u,\nabla v)_{\Omega} .
\]
\section{Stability and regularity of the solution}
Since the constant in the estimate \eqref{hi_Pe_estimate} depends on
the $H^2$-norm of the solution it is important to understand what
quantities it is possible to control, if robustness in
$\mu$ is required. In this section we will first show the standard
global regularity estimate for parabolic problems detailing the
dependence of the constant on $\mu$. We will then show
an estimate where the constant is independent of $\mu$, but not of
$\tau_F$. In both cases we will need to assume that data have some
smoothness.
\begin{lemma}\label{stand_ene}(Standard energy estimate)
Let $u$ be the solution of \eqref{weakmodel}, then there holds
\begin{equation}\label{standard_energy}
\sup_{t \in I}\|u(t)\|_{\Omega} + \|\mu^{\frac12} \nabla u\|_Q \lesssim \int_I \|f(t)\|_\Omega \mbox{d}t + \|u_0\|_\Omega.
\end{equation}
\end{lemma}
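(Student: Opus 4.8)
The plan is to use the classical energy method, testing the weak formulation \eqref{weakmodel} against the solution itself. Taking $v = u$ gives
\[
(\partial_t u, u)_\Omega + a(u,u) = (f,u)_\Omega .
\]
The first term equals $\tfrac12 \frac{d}{dt}\|u\|_\Omega^2$. The structural fact that makes the argument work is that the convective contribution to $a(u,u)$ vanishes: integrating by parts and using $\nabla\cdot\bfbeta = 0$ together with $u|_{\partial\Omega}=0$,
\[
(\bfbeta\cdot\nabla u, u)_\Omega = \tfrac12 (\bfbeta, \nabla(u^2))_\Omega = -\tfrac12 ((\nabla\cdot\bfbeta)\, u, u)_\Omega = 0 .
\]
What remains is the symmetric positive term $(\mu\nabla u, \nabla u)_\Omega = \|\mu^{1/2}\nabla u\|_\Omega^2$, so I obtain the energy identity
\[
\tfrac12 \frac{d}{dt}\|u\|_\Omega^2 + \|\mu^{1/2}\nabla u\|_\Omega^2 = (f,u)_\Omega .
\]

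To control the first term of \eqref{standard_energy} I would drop the nonnegative dissipation term and apply Cauchy--Schwarz, $(f,u)_\Omega \le \|f\|_\Omega\|u\|_\Omega$. Since $\tfrac12\frac{d}{dt}\|u\|_\Omega^2 = \|u\|_\Omega \frac{d}{dt}\|u\|_\Omega$, dividing by $\|u\|_\Omega$ yields $\frac{d}{dt}\|u\|_\Omega \le \|f\|_\Omega$. The one technical subtlety is that $t \mapsto \|u(t)\|_\Omega$ need not be differentiable where it vanishes; this is handled in the usual way by working with $(\|u\|_\Omega^2 + \varepsilon)^{1/2}$ and letting $\varepsilon \to 0$, or by noting that the norm is Lipschitz so the differential inequality holds almost everywhere. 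Integrating in time then gives $\|u(t)\|_\Omega \le \|u_0\|_\Omega + \int_0^t \|f\|_\Omega\,\mathrm{d}s$, and taking the supremum over $t \in I$ bounds the first term.

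For the dissipation term I would instead integrate the energy identity over $I$,
\[
\tfrac12\|u(T)\|_\Omega^2 + \|\mu^{1/2}\nabla u\|_Q^2 = \tfrac12\|u_0\|_\Omega^2 + \int_I (f,u)_\Omega\,\mathrm{d}t ,
\]
discard the nonnegative endpoint term, bound the source integral by $\sup_{t\in I}\|u(t)\|_\Omega \int_I \|f\|_\Omega\,\mathrm{d}t$, and insert the $L^\infty(I;L^2)$ bound just established; this shows $\|\mu^{1/2}\nabla u\|_Q$ is controlled by $\|u_0\|_\Omega + \int_I\|f\|_\Omega\,\mathrm{d}t$ as well, and adding the two contributions yields \eqref{standard_energy}.

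I do not expect a serious obstacle here, since this is a well-known estimate; the only two points requiring genuine care are the vanishing of the convective term, which hinges on the combination of $\nabla\cdot\bfbeta=0$ and the homogeneous Dirichlet condition, and the decision to differentiate $\|u\|_\Omega$ rather than $\|u\|_\Omega^2$. The latter is what allows a clean linear bound and avoids the Gronwall argument that a $\tfrac12\|f\|^2 + \tfrac12\|u\|^2$ splitting would force, with its spurious exponential factor; keeping that factor out is essential for the constant in \eqref{standard_energy} to be independent of $\mu$ and the data as claimed.
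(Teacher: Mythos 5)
Your proof is correct and takes essentially the same route as the paper: test the weak formulation with $v=u$, use that the convective term vanishes (divergence-free $\bfbeta$ plus the Dirichlet condition), treat the source in $L^1(I;L^2(\Omega))$ rather than $L^2(Q)$ so that no Gronwall exponential appears, and then recover the dissipation bound from the time-integrated energy identity combined with the $\sup_{t\in I}\|u(t)\|_\Omega$ bound. The only difference is technical and minor: where you differentiate $\|u\|_\Omega$ to obtain $\frac{d}{dt}\|u\|_\Omega \le \|f\|_\Omega$, the paper bounds $\int_I (f,u)\,\mbox{d}t \le \sup_{t\in I}\|u(\cdot,t)\|_\Omega \int_I \|f\|_\Omega\,\mbox{d}t$ and absorbs the supremum through the resulting quadratic inequality; both devices yield the same Gronwall-free constant.
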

\begin{proof}
We take $v=u$ in \eqref{weakmodel} and then use that
\[
\int_I (f,u)_{\Omega} ~\mbox{d}t \leq \sup_{t \in I} \|u(\cdot,t)\|_\Omega
\int_I \|f\|_\Omega ~\mbox{d}t
\]
from which the bound on $\sup_{t \in I}\|u(t)\|_{\Omega}$
follows. This bound is then used to get the
$H^1$-bound, which is a consequence of the equation \eqref{weakmodel}
and the equality
\[
\| \mu^{\frac12} \nabla u\|_Q^2 = \int_I a(u,u) ~\mbox{d}t.
\]
\end{proof}
\begin{theorem}\label{naive_regularity}
Let $u$ be the weak solution of \eqref{modelconvdiff},
with $f \in L^2(Q)$ and $u_0 \in H^1_0(\Omega)$.
Assume that $\Omega$ is a convex domain. Then there holds
\begin{multline*}
\sup_{t\in I}\|\mu^{\frac12} \nabla u(t)\|_\Omega + |\mu u|_{L^2(I;H^2(\Omega))}
\lesssim \mu^{-1/2} \left(\int_I \|f(t)\|_\Omega
\mbox{d}t + \|u_0\|_\Omega\right) \\+  \|f\|_{Q}+\|\mu^{\frac12} \nabla u_0\|_\Omega.
\end{multline*}
\end{theorem}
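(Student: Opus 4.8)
The plan is to derive the estimate by an energy argument using the test function $-\Delta u$, which on a convex domain gives control of the full $H^2$-norm through elliptic regularity. Rigorously the identities below should first be established for a Galerkin (or semidiscrete) approximation $u_m$, for which $\partial_t u_m$ and $\Delta u_m$ are admissible, the bounds being uniform in $m$; one then passes to the limit. I suppress this regularization in the sketch.

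First I would test \eqref{modelconvdiff} with $-\Delta u$ and integrate over $\Omega$. The time-derivative term produces
\[
-(\partial_t u, \Delta u)_\Omega = (\nabla \partial_t u, \nabla u)_\Omega = \tfrac12 \tfrac{\mathrm{d}}{\mathrm{d}t}\|\nabla u\|_\Omega^2,
\]
the boundary contribution vanishing since $u\vert_{\partial\Omega}=0$ for every $t$ forces $\partial_t u\vert_{\partial\Omega}=0$. The diffusion term gives $\mu\|\Delta u\|_\Omega^2$, so after integrating over $I$ one obtains the identity
\[
\tfrac12\|\nabla u(T)\|_\Omega^2 + \mu\|\Delta u\|_Q^2 = \tfrac12\|\nabla u_0\|_\Omega^2 - (\bfbeta\cdot\nabla u,\Delta u)_Q - (f,\Delta u)_Q.
\]

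The two remaining terms on the right are bounded by Cauchy--Schwarz followed by Young's inequality, using the normalization $\|\bfbeta\|_{L^\infty(Q)}=1$:
\[
|(\bfbeta\cdot\nabla u,\Delta u)_Q| \leq \tfrac{\mu}{4}\|\Delta u\|_Q^2 + \tfrac{1}{\mu}\|\nabla u\|_Q^2, \qquad |(f,\Delta u)_Q| \leq \tfrac{\mu}{4}\|\Delta u\|_Q^2 + \tfrac{1}{\mu}\|f\|_Q^2.
\]
Absorbing the $\tfrac{\mu}{4}\|\Delta u\|_Q^2$ terms into the left-hand side leaves
\[
\tfrac12\|\nabla u(T)\|_\Omega^2 + \tfrac{\mu}{2}\|\Delta u\|_Q^2 \leq \tfrac12\|\nabla u_0\|_\Omega^2 + \tfrac{1}{\mu}\|\nabla u\|_Q^2 + \tfrac{1}{\mu}\|f\|_Q^2.
\]
The crucial first-order term $\tfrac{1}{\mu}\|\nabla u\|_Q^2 = \mu^{-2}\|\mu^{1/2}\nabla u\|_Q^2$ is then controlled by the standard energy estimate of Lemma~\ref{stand_ene}, which bounds $\|\mu^{1/2}\nabla u\|_Q$ by $\int_I\|f\|_\Omega\,\mbox{d}t+\|u_0\|_\Omega$; this is exactly the mechanism that generates the $\mu^{-1/2}$-weighted data term in the claimed estimate.

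Finally I would convert the $\|\Delta u\|_Q$ control into an $H^2$-bound. Since $\Omega$ is convex and $u(t)\in H^1_0(\Omega)$, elliptic regularity gives $\|u(t)\|_{H^2(\Omega)}\lesssim\|\Delta u(t)\|_\Omega$, whence $\|\mu u\|_{L^2(I;H^2(\Omega))}^2\lesssim\mu^2\|\Delta u\|_Q^2 = \mu\cdot\mu\|\Delta u\|_Q^2$; multiplying the displayed inequality by $\mu$ and taking square roots yields both $\|\mu u\|_{L^2(I;H^2(\Omega))}$ and, from the $\tfrac12\|\nabla u(T)\|_\Omega^2$ term (after multiplying by $\mu$ and taking the supremum over $T$), the term $\sup_{t\in I}\|\mu^{1/2}\nabla u(t)\|_\Omega$, with precisely the three contributions on the right of the statement. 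The main obstacle is the convection term: it must be estimated so as to produce only the $\mu^{-1}$-weighted $\|\nabla u\|_Q^2$, which is affordable only because Lemma~\ref{stand_ene} supplies an a priori bound on $\|\mu^{1/2}\nabla u\|_Q$; keeping track of the powers of $\mu$ through the two multiplications is the delicate bookkeeping that makes the final constants come out as stated.
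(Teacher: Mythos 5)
Your proposal is correct and follows essentially the same argument as the paper: testing with $-\Delta u$ (the paper uses $-\mu\Delta u$, a trivial rescaling) over $(0,t^*)\times\Omega$, absorbing the convection and source terms by Cauchy--Schwarz and Young's inequality, invoking Lemma~\ref{stand_ene} to control $\mu^{-1}\|\mu^{1/2}\nabla u\|_Q^2$, and finishing with elliptic regularity on the convex domain. The sign on $(\bfbeta\cdot\nabla u,\Delta u)_Q$ in your integrated identity is off, but this is immaterial since you bound the term in absolute value.
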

\begin{proof}
Multiplying \eqref{modelconvdiff} with $-\mu \Delta u$, integrating
over the space time domain $Q^* := (0,t^*) \times \Omega$, with $t^*<T$ and applying
the Cauchy-Schwarz inequality and the arithmetic-geometric inequality gives
\begin{multline*}
\|\mu^{\frac12} \nabla u(t^*)\|^2_\Omega + \|\mu \Delta u\|^2_{Q^*} \leq
\|\bfbeta\|^2_{L^{\infty}(Q^*)} \mu^{-1} \| \mu^{\frac12} 
\nabla u\|_{Q^*}^2 + \frac14 \|\mu \Delta u\|^2_{Q^*} \\
+ \|f\|^2_{Q^*}+\|\mu^{\frac12} \nabla u_0\|^2_\Omega .
\end{multline*}
It follows from \eqref{standard_energy} that 
\[
\sup_{t \in I} \|\mu^{\frac12} \nabla u(t)\|_\Omega \lesssim
\|\bfbeta\|_{L^{\infty}(Q)} \mu^{-1/2} (\int_I \|f(t)\|_\Omega
\mbox{d}t + \|u_0\|_\Omega) +  \|f\|_{Q}+\|\mu^{\frac12} \nabla u_0\|_\Omega
\]
and
\[
\|\mu \Delta u\|_{Q} \lesssim  \|\bfbeta\|_{L^{\infty}(Q)} \mu^{-1/2} (\int_I \|f(t)\|_\Omega
\mbox{d}t + \|u_0\|_\Omega) +  \|f\|_{Q}+\|\mu^{\frac12} \nabla u_0\|_\Omega.
\]
We conclude using elliptic regularity, recalling that the advection
velocity is normalised.
\end{proof}
\begin{remark}
Observe that it follows that $\|\nabla u\|_Q \lesssim \mu^{-1/2}$, $\sup_{t \in I} \|\nabla
u\|_\Omega \lesssim \mu^{-1}$
and $\|u\|_{L^2(I;H^2(\Omega))} \lesssim
\mu^{-3/2}$. Hence we conclude that the regularity estimates of Lemma \ref{stand_ene} and Theorem
\ref{naive_regularity} both are sensitive to the variation of the
diffusivity and can only be used when $\mathrm{Pe_h} \leq
1$. Also some regularity of the inital data is needed, $u_0 \in H^1_0(\Omega)$. Revisiting
the estimate \eqref{hi_Pe_estimate} in this context we get
\begin{equation}\label{error_est_low_Pe}
\|\mu^{\frac12} \nabla (u - u_h)\|_Q \lesssim
\Bigl[\left(\frac{h}{\mu}\right)^{3/2}+ \left(\frac{h}{\mu}\right)
\Bigr]  (\int_I \|f(t)\|_\Omega
\mbox{d}t + \|u_0\|_\Omega+  \mu^{\frac12} \|f\|_{Q}+\|\mu^{\frac12} \nabla u_0\|_\Omega).
\end{equation}
This estimate is clearly useless when $\mathrm{Pe}_h > 1$.
\end{remark}

We will now show that using the scale separation of the advection
field and assuming some more regularity of data, all inverse powers of
the diffusivity may be avoided. The price to pay is an exponential
constant, which however, under our assumption on the velocity field
remains moderate. 
It should be noted that if no assumption is made on the velocity
field, sharp layers in the velocity may result in $O(\mu^{\frac12})$ width
layers in the solution $u$. The corresponding growth in the $H^1$-norm,
makes
global estimates independent of $\mu$ impossible.
\begin{theorem}\label{forward_stability}
Let $u$ be the weak solution of \eqref{modelconvdiff}, with $\Omega$ convex, $f \in
L^2(I;H^1_0(\Omega))$ and
$u_0 \in H^1_0(\Omega)$. Then for $0<\frakh$, $\mu < 1$,
there holds
\begin{multline}\label{mu_indep_est}
\sup_{t \in I} \|u(\cdot,t)\|_\frakh + |(\frakh \mu)^{\frac12} 
u|_{L^2(I;H^2(\Omega))}  + T^{-\frac12} \|\frakh^{\frac12} \nabla u\|_Q +
T^{-\frac12} \|\frakh^{\frac12} \partial_t u\|_Q
\\
\lesssim C_T
( \frakh^{\frac12} \|f\|_{L^2(I;H^1(\Omega))} + \int_I \|f\| \mbox{d}t + \|u_0\|_\frakh),
\end{multline} 
with
$
C_T = e^{\left(c_\Lambda \frac{T}{\tau_F} \right) },
$
where $\tau_F$ is given by \eqref{chartime} and $c_\Lambda$ is a moderate constant .
\end{theorem}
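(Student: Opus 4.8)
The plan is to combine the standard energy identity of Lemma \ref{stand_ene} with a second energy relation obtained by testing the weak formulation \eqref{weakmodel} against $v = -\frakh\Delta u$, and then to close the argument with a linear Gr\"onwall inequality whose exponent is kept moderate by the scale-separation hypothesis \eqref{chartime}. Testing with $-\frakh\Delta u$ (which presumes $u(\cdot,t)\in H^2\cap H^1_0(\Omega)$, a regularity one justifies either by passing to the limit in a Galerkin semidiscretization or by invoking the qualitative parabolic smoothing of Theorem \ref{naive_regularity}), the time-derivative term yields $\tfrac{\frakh}{2}\tfrac{d}{dt}\|\nabla u\|_\Omega^2$, the diffusion term yields the coercive quantity $\frakh\mu\|\Delta u\|_\Omega^2$, and the source term yields $\frakh(\nabla f,\nabla u)_\Omega$ after an integration by parts that uses $f\in H^1_0(\Omega)$ to kill the boundary contribution.

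The crux is the advective term $\frakh(\bfbeta\cdot\nabla u,-\Delta u)_\Omega$, which I would split according to $\bfbeta = \overline\bfbeta + \bfbeta'$ and treat the two pieces by completely different mechanisms. For the coarse part I would integrate by parts and use the pointwise identity $\int_\Omega(\overline\bfbeta\cdot\nabla u)(-\Delta u) = \int_\Omega (\nabla u)^\top(\nabla\overline\bfbeta)(\nabla u) - \tfrac12\int_\Omega(\nabla\cdot\overline\bfbeta)|\nabla u|^2$, where every boundary term vanishes because $u=0$ forces $\nabla u$ to be normal, together with the non-penetration $\overline\bfbeta\cdot n_{\partial\Omega}=0$; this bounds the coarse contribution by $\frakh\|\overline\bfbeta\|_{W^{1,\infty}(\Omega)}\|\nabla u\|_\Omega^2$. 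The fine part is the decisive step: since $\nabla\bfbeta'$ is uncontrolled one must \emph{not} integrate by parts but instead retain the Laplacian and estimate directly, $\frakh|(\bfbeta'\cdot\nabla u,-\Delta u)_\Omega| \le \tfrac{\frakh\mu}{2}\|\Delta u\|_\Omega^2 + \tfrac{\frakh}{2}\tfrac{\|\bfbeta'\|_{L^\infty(\Omega)}^2}{\mu}\|\nabla u\|_\Omega^2$. The first term is absorbed by the coercive diffusion, and the second is merely $O(1)\cdot\frakh\|\nabla u\|_\Omega^2$ precisely because $\|\bfbeta'\|_{L^\infty(\Omega)}^2\sim\mu$; this is the mechanism by which the fine scale enters the Gr\"onwall exponent through $\tau_F^{-1}$ rather than through an inverse power of $\mu$.

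Adding the two energy relations and writing $E(t) := \|u(\cdot,t)\|_\frakh^2 = \|u\|_\Omega^2 + \frakh\|\nabla u\|_\Omega^2$, I would arrive at $\tfrac12\tfrac{d}{dt}E + \tfrac{\frakh\mu}{2}\|\Delta u\|_\Omega^2 \le (\|f\|_\Omega + \frakh^{1/2}\|\nabla f\|_\Omega)E^{1/2} + C\tau_F^{-1}E$, the zeroth-order term $(\bfbeta\cdot\nabla u,u)_\Omega$ having dropped out by $\nabla\cdot\bfbeta=0$ and $\bfbeta\cdot n_{\partial\Omega}=0$, and the Gr\"onwall coefficient $\|\overline\bfbeta\|_{W^{1,\infty}(\Omega)} + \|\bfbeta'\|_{L^\infty(\Omega)}^2/\mu$ being comparable to $\tau_F^{-1}$ under \eqref{chartime}. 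Setting $y=E^{1/2}=\|u\|_\frakh$ and dividing by $y$ turns this into $y' \le C\tau_F^{-1}y + (\|f\|_\Omega + \frakh^{1/2}\|\nabla f\|_\Omega)$, whose integrating-factor solution gives $\sup_{t\in I}\|u\|_\frakh \lesssim e^{CT/\tau_F}(\|u_0\|_\frakh + \int_I\|f\|_\Omega\,dt + \frakh^{1/2}\|f\|_{L^2(I;H^1(\Omega))})$, the claimed bound on the first term with $C_T = e^{T/\tau_F}$.

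The remaining three terms follow with no new ideas. Reintegrating the differential inequality while retaining the coercive term and inserting the bound just obtained controls $\|(\frakh\mu)^{1/2}\Delta u\|_Q$; the gradient term is immediate from $T^{-1/2}\|\frakh^{1/2}\nabla u\|_Q \le \sup_{t\in I}\frakh^{1/2}\|\nabla u\|_\Omega \le \sup_{t\in I}\|u\|_\frakh$; and the time-derivative term comes from solving \eqref{modelconvdiff} for $\partial_t u = f - \bfbeta\cdot\nabla u + \mu\Delta u$, estimating $T^{-1/2}\|\frakh^{1/2}\partial_t u\|_Q$ by the three bounds already derived, using $\|\bfbeta\|_{L^\infty(Q)}=1$ for the advective contribution and $\mu\lesssim 1$ to reduce the diffusive contribution to $\|(\frakh\mu)^{1/2}\Delta u\|_Q$. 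The main obstacles I anticipate are twofold: rigorously justifying the admissibility of the test function $-\frakh\Delta u$ (a density or Galerkin argument, independent of the final $\mu$-uniform constants) and the bookkeeping that identifies the Gr\"onwall exponent with $\tau_F^{-1}$. The genuinely essential idea, however, is the asymmetric treatment of $\overline\bfbeta$ and $\bfbeta'$ in the advective term described above.
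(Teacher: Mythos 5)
Your proposal is correct and follows essentially the same route as the paper's own proof: testing with $-\frakh\Delta u$, splitting the advection into $\overline\bfbeta$ (integrated by parts using $u|_{\partial\Omega}=0$ and $\overline\bfbeta\cdot n_{\partial\Omega}=0$) and $\bfbeta'$ (absorbed into the $\frakh\mu\|\Delta u\|^2$ term using $\|\bfbeta'\|^2_{L^\infty}/\mu\sim 1$), followed by Gr\"onwall with exponent $T/\tau_F$. The only deviations are cosmetic: you run a single Gr\"onwall for $\|u\|^2_\frakh$ where the paper treats the $L^2$ and weighted $H^1$ energies separately and adds them, and you bound $\frakh^{1/2}\partial_t u$ directly from the PDE rather than by testing with $\frakh\,\partial_t u$ as the paper does; both variants are sound.
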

\begin{proof}
First observe that we know from Theorem \ref{naive_regularity} that $u
\in L^2(I;H^2(\Omega))$, so the important addition in 
Theorem \ref{forward_stability} is the control of
$\|u(\cdot,t)\|_{\frakh}$ that is robust $\mu$. Multiply equation \eqref{modelconvdiff} by $-\frakh \Delta u$ and integrate
over $Q^*:=(0,t^*)\times \Omega$ with $t^*<T$, to obtain
\begin{multline*}
\frac12 \|\frakh^{\frac12} \nabla u(\cdot,t^*)\|^2_\Omega - (\bfbeta \cdot \nabla u,
\frakh \Delta u)_{Q^*} + \|(\mu \frakh)^{\frac12} \Delta u\|^2_{Q^*}\\
\lesssim(\nabla f,\frakh \nabla u)_{Q^*}
+ \frac12 \|\frakh^{\frac12} \nabla u_0\|^2_\Omega.
\end{multline*}
The second term in the left hand side does not have a sign and
requires some further consideration. First split the velocity field in
the large and the fine scale component,
\[
- (\bfbeta \cdot \nabla u,
\frakh \Delta u)_{Q^*} =  -(\overline \bfbeta \cdot \nabla u,
\frakh \Delta u)_{Q^*}  -  (\bfbeta' \cdot \nabla u,
\frakh \Delta u)_{Q^*},
\]
 then integrate by parts in the 
term representing the large scale transport, noting that if $t_1$
and $t_2$ denotes the two orthonormal tangential vectors to $\partial \Omega$,
$$\overline \bfbeta \cdot \nabla u\vert_{\partial \Omega} = \underbrace{\overline
\bfbeta \cdot n_{\partial \Omega}}_{=0} \nabla u \cdot n_{\partial \Omega}
\vert_{\partial \Omega} + \sum_{i=1}^2 \overline
\bfbeta \cdot t_i (\underbrace{\nabla u \cdot t_i}_{=0})
\vert_{\partial \Omega}  = 0.$$
Recalling that $u \in L^2(I;H^2(\Omega))$ and $\overline \bfbeta \in
C^0(I;W^{1,\infty}(\Omega))$ we have $\overline \bfbeta \cdot \nabla u \in
L^2(I;H_0^1(\Omega)) $. Then the following integration by parts is justified
\[
-(\overline \bfbeta \cdot \nabla u,
\frakh \Delta u)_{Q^*} = (\nabla (\overline \bfbeta \cdot \nabla u),
\frakh \nabla u)_{Q^*}.
\]
Note that by the product rule
\begin{multline}\label{transport_term}
(\nabla (\overline \bfbeta \cdot \nabla u),\frakh \nabla
u)_{Q^*}  = \sum_{i=1}^d ((\partial_{x_i} \overline \bfbeta) \cdot \nabla
u, \frakh \partial_{x_i} u)_{Q^*} + \sum_{i=1}^d
(\overline \bfbeta\cdot  (\partial_{x_i}  \nabla
u), \frakh \partial_{x_i} u)_{Q^*}.
\end{multline}
For the first sum of the right hand side we have
\[
\sum_{i=1}^d ((\partial_{x_i} \overline \bfbeta) \cdot \nabla
u, \frakh \partial_{x_i} u)_{Q^*} = ((\nabla_S \overline \bfbeta) \cdot \nabla
u, \frakh \nabla  u)_{Q^*} 
\]
where $\nabla_S$ denotes the symmetric part of the gradient tensor.
Similarly we obtain for the second part
\begin{multline}
\sum_{i=1}^d
(\overline \bfbeta\cdot  (\partial_{x_i}  \nabla
u), \frakh \partial_{x_i} u)_{Q^*} = \sum_{i=1}^d \sum_{j=1}^d
(\overline \beta_j   (\partial_{x_i}  \partial_{x_j}
u), \frakh \partial_{x_i} u)_{Q^*} \\ =  \sum_{i=1}^d \sum_{j=1}^d
(\overline \beta_j   (\partial_{x_j}  \partial_{x_i}
u), \frakh \partial_{x_i} u)_{Q^*} =  \sum_{i=1}^d 
(\overline \bfbeta   \cdot \nabla \partial_{x_i}
u, \frakh \partial_{x_i} u)_{Q^*}.
\end{multline}
By the divergence theorem, recalling that $\overline \bfbeta \cdot
n_{\partial \Omega} = 0$, we have
\[
\sum_{i=1}^d 
(\overline \bfbeta   \cdot \nabla \partial_{x_i}
u, \frakh \partial_{x_i} u)_{Q^*} = - \frac12 \sum_{i=1}^d 
(\nabla \cdot \overline \bfbeta \,  \partial_{x_i}
u, \frakh \partial_{x_i} u)_{Q^*}. 
\]
We conclude that, with $\mathcal{I}$ denoting the identity matrix, 
\begin{equation}\label{transport_identity}
-(\overline \bfbeta \cdot \nabla u,
\frakh \Delta u)_{Q^*} = ((\nabla_S \overline \bfbeta - \frac12 \nabla \cdot
\overline \bfbeta \mathcal{I}) \nabla
u, \frakh \nabla u)_{Q^*} 
\end{equation}
Observing that
\begin{multline}
 (\bfbeta' \cdot \nabla u,
\frakh \Delta u)_{Q^*} \leq
\|\frakh^{\frac12}  |\bfbeta'| \mu^{-1/2} \nabla u\|_{Q^*}\|(\frakh
\mu)^{\frac12}\Delta u\|_{Q^*}\\
\leq \frac12 \|\frakh^{\frac12}  |\bfbeta'| \mu^{-1/2} \nabla u\|^2_{Q^*} + \frac12 \|(\frakh
\mu)^{\frac12}\Delta u\|_{Q^*}^2
\end{multline}
we have,
\begin{multline}
\frac12 \|\frakh^{\frac12} \nabla u(\cdot,t^*)\|^2_\Omega + \frac12 \|(\mu \frakh)^{\frac12}
\Delta u\|^2_{Q^*} \leq  \int_0^{t^*} (\nabla f , \frakh
\nabla u)_{\Omega} \\ +
\frac12\|\frakh^{\frac12} \nabla u_0\|^2_\Omega +  (\Lambda(\overline
\bfbeta,\bfbeta',\mu) \nabla u, \frakh \nabla u)_{\Omega}
\end{multline}
where 
\[
\Lambda(\overline
\bfbeta,\bfbeta',\mu) := \nabla_S \overline \bfbeta - \frac12 \nabla \cdot
\overline \bfbeta \mathcal{I} + \frac12 |\bfbeta'|^2 \mu^{-1}.
\]
Defining 
\begin{equation}\label{real_chartime}
\tilde \tau_F^{-1}  := \sup_{t \in I} \inf_{\substack{\overline\bfbeta \in W^{1,\infty}(\Omega) \\
  \overline \bfbeta \cdot n = 0 \mbox{ on } \partial \Omega}} \sigma_p(\Lambda(\overline
\bfbeta,\bfbeta',\mu))
\end{equation}
where $\sigma_p(A)$ denotes the largest positive eigenvalue of the
matrix $A$, we may write
\begin{multline}
\|\frakh^{\frac12} \nabla u(\cdot,t^*)\|^2_\Omega +  \|(\mu \frakh)^{\frac12}
\Delta u\|^2_{Q^*} \leq t^* \|\frakh^{\frac12} \nabla f \|^2_{Q^*} \\+
((t^*)^{-1}+ 2 \tilde \tau_F^{-1}) \|\frakh^{\frac12} \nabla u\|^2_{Q^*} +
\|\frakh^{\frac12} \nabla u_0\|^2_\Omega.
\end{multline}
The claim regarding the control of the space derivatives now follows
after an application of Gronwall's lemma, yielding
\begin{equation}\label{firstest}
\|\frakh^{\frac12} \nabla u(\cdot,t^*)\|^2_\Omega +
\|(\frakh \mu)^{\frac12} \Delta u\|^2_{Q^*} \lesssim e^{2 t^*/\tilde \tau_F}
\left(\| \frakh ^{\frac12}\nabla f \|_{Q^*}^2 + 
\|\frakh^{\frac12} \nabla u_0\|^2_\Omega \right).
\end{equation}
Combining this estimate with estimate \eqref{standard_energy} yields
the claim for the first two terms in the left hand side of equation \eqref{mu_indep_est}.
Note that 
we also have 
\begin{equation}\label{gradphiQ}
T^{-\frac12} \|\frakh^{\frac12} \nabla u\|^2_Q \lesssim \sup_{t \in I}
\|\frakh^{\frac12} \nabla u(\cdot,t)\|^2_\Omega \lesssim e^{2 T/\tilde
  \tau_F}
\left(\| \frakh ^{\frac12}\nabla f \|_{Q}^2 + 
\|\frakh^{\frac12} \nabla u_0\|^2_\Omega \right).
\end{equation}
For the control of the time derivative, simply multiply the equation
with $\frakh \partial_t u$ and integrate to obtain
\begin{multline*}
\frac12 \|\frakh^{\frac12} \partial_t u\|^2_Q \leq -(\bfbeta \cdot \nabla
u, \frakh \partial_t u)_Q +\frac12 \|\frakh^{\frac12}f\|^2_Q + \frac12
\|(\frakh \mu)^{\frac12}
  \nabla u_0\|^2_\Omega\\ \leq \|\bfbeta\|_{L^\infty(Q)} \|\frakh^{\frac12} \nabla
u\|_Q \|\frakh^{\frac12} \partial_t u\|_Q +\frac12 \|\frakh^{\frac12} f\|^2_Q + \frac12 \|(\frakh \mu)^{\frac12}
  \nabla u_0\|^2_\Omega
\end{multline*}
and we conclude using the estimate \eqref{gradphiQ} and the
observation that comparing the definitions \eqref{chartime} and
\eqref{real_chartime} we have $\tau_F \sim \tilde \tau_F$, with a
moderate constant.
Since $\Omega$ is convex, elliptic regularity holds
\[
|(\mu \frakh)^{\frac12} u|_{L^2(I;H^2(\Omega))} \lesssim \|(\mu
\frakh)^{\frac12} \Delta u\|_Q.
\]
The claim follows.
\end{proof}

\section{Finite element discretizations}
Let $\{\mathcal{T}_h\}$ be a family of nonoverlapping conforming,
quasi uniform
triangulations, $\mathcal{T}_h:= \{K\}_h$ where the simplices $K$ have diameter $h_K$ and that is
indexed by $h := \max h_K$. We let the set of interior faces $\{F\}_h$
of a
triangulation $\mathcal{T}_h$ be denoted by $\mathcal{F}$.

We will consider a standard finite element space of piecewise polynomial,
continuous functions
\[
V^0_h:=\{v_h \in H_0^1(\Omega): v_h\vert_K \in P_k(K),\quad \forall K \in \mathcal{T}_h \},
\]
where $P_k(K)$ denotes the polynomials of degree less than or equal
to $k$ on $K$.
The following inverse inequalities are known to hold on $V^0_h$,
\begin{equation}\label{inverse}
\|\nabla v_h\|_{K} \leq c_i h_K^{-1} \| v_h\|_{K}
\end{equation}
and
\begin{equation}\label{trace}
\|v_h\|_{\partial K} \leq c_t h_K^{-1/2} \| v_h\|_{K}.
\end{equation}
We let $\pi_h:L^2(\Omega) \rightarrow V^0_h$ denote the 
$L^2$-projection defined by $\pi_h v \in V^0_h$ such that $(\pi_h v, w_h)_\Omega = (v,w_h)_\Omega$
for all $w_h \in V^0_h$.

The standard finite element method is then obtained by restricting the
weak formulation \eqref{weakmodel} to the discrete space $V_h^0$.
For $t>0$ find $u_h \in V_h^0$ such that $u_h(x,0) = \pi_h u_0(x)$ and
\begin{equation}\label{FEMGS}
(\partial_t u_h,v_h)_\Omega + a(u_h,v_h) = (f,v_h)_\Omega, \quad \forall v_h \in V_h^0.
\end{equation}
Taking $v_h = u_h$ we immediately get the stability estimate
\[
\sup_{t \in (0,T]} \|u_h(t)\|_{\Omega} + \left( \int_0^T
\|\mu^{\frac12} \nabla u_h\|^2_{\Omega} ~\mbox{d}t \right)^{\frac12} \lesssim
\int_0^T \|f\|_{\Omega} ~\mbox{d}t + \|u_0\|_\Omega.
\]

When the P\'eclet number is low this approximations of the parabolic equation
may be analysed using well known finite element techniques, see Thom\'ee\cite{Thom97}.

However when the local P\'eclet number is high, the problem is a
singularly perturbed parabolic problem and the stability
properties of the standard Galerkin method are in general insufficient
for optimal convergence. In particular in the presence of layers the
whole computational domain may be polluted by spurious oscillations,
but as can be seen in Figure \ref{gaussian}, convergence order is lost
also for smooth solutions. In this case we study a Gaussian function convected one turn in
a disc. Time discretization is performed with the Crank-Nicolson
method and we compare the result of the standard Galerkin method with
those obtained using symmetric stabilization methods. 

In this paper we will consider symmetric stabilization methods only,
however at least for time constant $\bfbeta$ one can obtain similar
results for the SUPG-method.
\begin{figure}[h!]
\centering
\includegraphics[width=8cm]{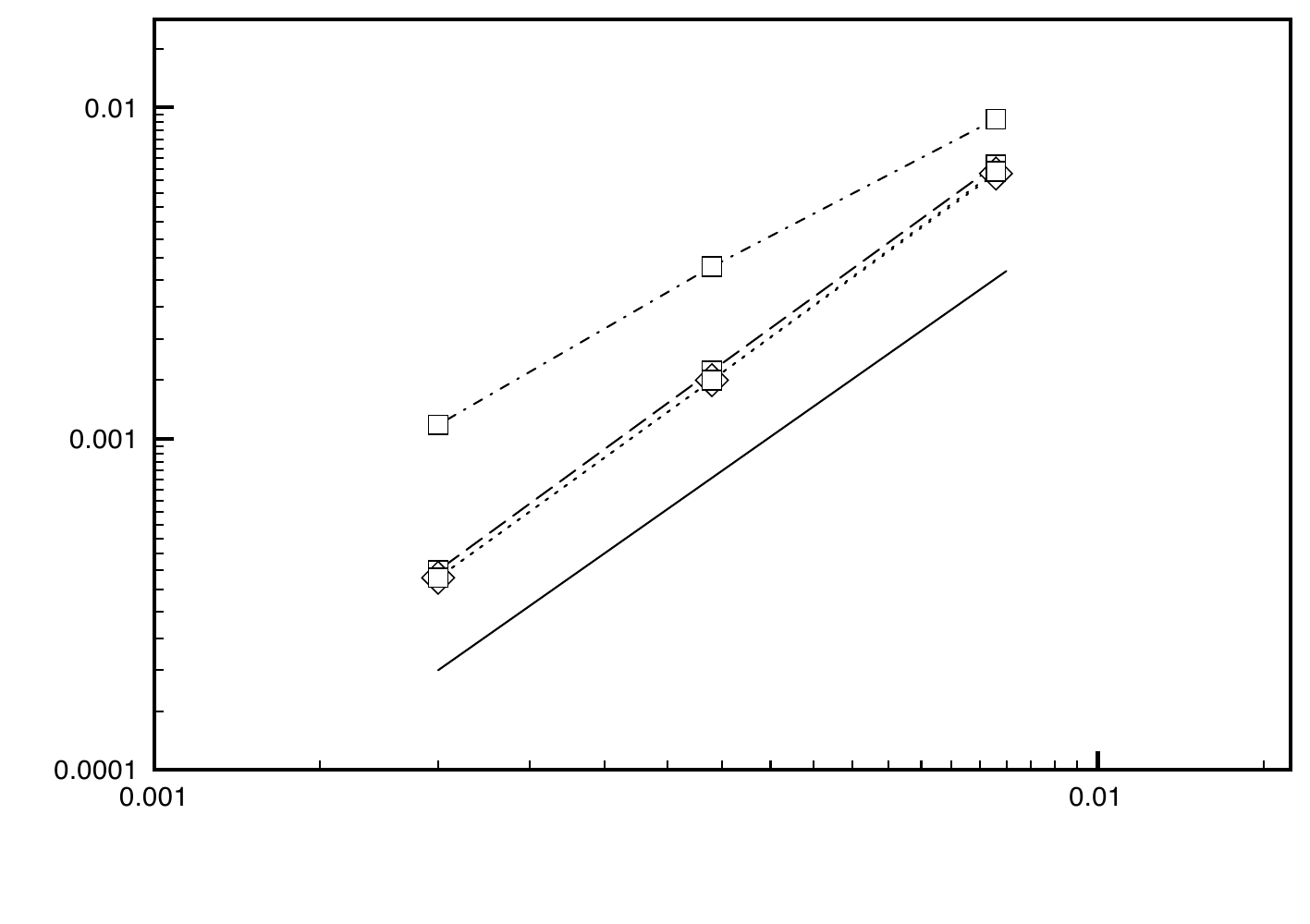}
\caption{Convergence in the $L^2$-norm plotted against the timestep $\tau=h$, of the
  stabilized (dashed line for explicit and dotted line for implicit
  treatment of the stabilization operator) and unstabilized (dash-dot) Crank-Nicolson method. The full line shows optimal second order convergence.}\label{gaussian}
\end{figure}
\subsection{Symmetric stabilization methods}
In the last ten years there has been an important development in the
field of high order symmetric stabilization methods. Such methods are
typically obtained by the addition of a weakly consistent, dissipative
operator to the formulation. 
Some of the more
important symmetric stabilization methods present in the literature
are the subgrid viscosity method suggested by Guermond\cite{Gue99}, the orthogonal
subscale method proposed by Codina\cite{Cod00}, the local project method
introduced by Becker and Braack\cite{BB04}, the discontinuous Galerkin
method\cite{JP86} and the continuous interior
penalty (CIP)  method suggested by Douglas and Dupont\cite{DD76} and analysed by Burman
and Hansbo\cite{BH04}. The analysis that we propose herein will rely on an
orthogonality argument and hence is valid only for the orthogonal
subscales, the discontinuous Galerkin method and for the CIP-method.
For simplicity we will focus on the latter below.

Herein we will assume that the following holds:
\begin{enumerate}
\item $s_h(\cdot,\cdot):V_h^0\times V_h^0 \mapsto \mathbb{R}$ is a
  symmetric, bilinear and positive semi
  definite operator.
\item For the $L^2$-projection $\pi_h:L^2(\Omega)
  \mapsto V_h^0$ there holds
\begin{itemize}
\item[--] Approximability, $\forall u \in H^1_0(\Omega)\cap H^{k+1}(\Omega)$
\begin{equation}\label{approx}
\|h^{-1/2} (u - \pi_h u)\|_{\Omega} + \|\mu^{\frac12} \nabla (u - \pi_h
  u)\|_{\Omega} \lesssim (h^{\frac12} + \mu^{\frac12}) h^{k}
  |u|_{H^{k+1}(\Omega)}, 
\end{equation}
\item[--] Stability and weak consistency of the stabilization operator
\begin{equation}
s_h(u_h,\pi_h v) \lesssim  s_h(u_h,u_h)^{\frac12} h^{\frac12} \|\nabla
v\|_\Omega,\quad \forall u_h \in V_h^0,\, v \in H^1(\Omega).
\end{equation}
\begin{equation}\label{weakconsit}
s_h(\pi_h u,\pi_h u) \lesssim h^{k+1/2}
  |u|_{H^{k+1}(\Omega)},\quad \forall u \in H^{k+1}(\Omega).
\end{equation}
\item[--]  Enhanced continuity of the convection term: for all $v_h
  \in V_h^0$ and $\bfbeta \in W^{1,\infty}(\Omega)$, with $\bfbeta \cdot
  n_{\partial \Omega} \vert_{\partial \Omega}=0$, there holds
\begin{multline}\label{enhance_cont}
|(u-\pi_h u, \bfbeta \cdot \nabla v_h)_\Omega| \lesssim
  \|\bfbeta\|_{W^{1,\infty}(\Omega)} \|u-\pi_h u\|_{\Omega}
  \|v_h\|_{\Omega} \\
+ \|\bfbeta\|^{\frac12}_{L^\infty(\Omega)} \|h^{-1/2} (u - \pi_h u)\|_{\Omega}  s_h(v_h,v_h).
\end{multline}
\end{itemize}
\end{enumerate}
As an example consider the CIP-method. Here $s_h(\cdot,\cdot)$ consists in a penalty on the jump of the gradient over
element faces, and takes the form
\[
s_h(u_h,v_h):= \gamma \sum_{F \in \mathcal{F}}\left<h^2_F
\|\bfbeta \cdot n_F\|_{L^\infty(F)} \jump{\nabla u_h \cdot n_F} ,\jump{\nabla
v_h
\cdot n_F}\right>_{F}
\]
where $F$ denotes the faces in the mesh, $\jump{x}$ the jump of $x$
over $F$, the orientation is not important, $n_F$ a fixed but arbitrary
normal associated to each face. In the
analysis below we will for simplicity use this stabilization operator.

The stabilized finite element method then takes the form, for $t>0$ find $u_h \in V_h^0$ such that $u_h(x,0) = \pi_h u_0(x)$ and
\begin{equation}\label{FEMSYMSTAB}
(\partial_t u_h,v_h)_\Omega + a(u_h,v_h) + s_h(u_h,v_h) = (f,v_h)_\Omega, \quad \forall v_h \in V_h^0.
\end{equation}
For the satisfaction of the assumptions
\eqref{approx}--\eqref{enhance_cont} we refer to Ref. \refcite{BFH06}. Although in that reference Nitsche-type boundary
conditions are used, the same argument may be shown to work whenever
$\bfbeta\cdot \nabla v_h\vert_{\partial \Omega} = 0$, which is the
case herein. For completeness we show how to obtain \eqref{enhance_cont} in our case. We also show
that the flow field $\bfbeta$ in the stabilization operator may be
replaced by $\overline \bfbeta$ at the cost of a nonessential
perturbation. The key result is the following Lemma.
\begin{lemma}\label{mod_interpol_strong}
Assume that for $k\ge 2$ no element in $\mathcal{T}_h$ has more than
one face intersecting $\partial \Omega$. Then there holds for all $u_h \in V_h^0$,
\[
\inf_{v_h \in V_h^0} \|h^{\frac12}(\pi_0 \bfbeta \cdot \nabla u_h - v_h)\|_\Omega \lesssim
s_h(u_h,u_h)^{\frac12} + h^{\frac12} \|\bfbeta\|_{W^{1,\infty}(\Omega)} \|u_h\|_\Omega,
\]
where $\pi_0 \bfbeta$ is a piecewise constant approximation of
$\bfbeta$ that will be defined below.
\end{lemma}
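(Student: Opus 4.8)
The plan is to construct the competitor $v_h$ explicitly as an averaging (Oswald-type) quasi-interpolant of the discontinuous field $w_h := \pi_0\bfbeta\cdot\nabla u_h$, and then reduce the infimum to a sum of face contributions measured by the jump of $w_h$. Since $\pi_0\bfbeta$ is piecewise constant and $\nabla u_h$ is piecewise polynomial of degree $k-1$, the field $w_h$ is a (discontinuous) piecewise polynomial of degree at most $k-1$ on each element, hence representable in $V_h$, and it admits an averaging interpolant $v_h\in V_h$ satisfying the standard estimate $\sum_K\|w_h - v_h\|_K^2 \lesssim \sum_{F\in\mathcal F} h_F\|\jump{w_h}\|_F^2$, where the sum runs over interior faces only. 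Using quasi-uniformity ($h\sim h_F$) this gives
\[
\|h^{\frac12}(\pi_0\bfbeta\cdot\nabla u_h - v_h)\|_\Omega^2 \lesssim \sum_{F\in\mathcal F} h_F^2\|\jump{\pi_0\bfbeta\cdot\nabla u_h}\|_F^2,
\]
so the entire argument reduces to estimating the weighted jumps of $w_h$.

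The crucial step is the decomposition of this jump, which exploits that $u_h$ is globally continuous so that its tangential derivatives do not jump, $\jump{\nabla u_h\cdot t_i} = 0$. Writing $\{\cdot\}$ for the average across a face and applying the product rule $\jump{ab} = \{a\}\jump b + \jump a\{b\}$ componentwise in the orthonormal frame $(n_F,t_1,t_2)$, the tangential contributions carrying $\{\pi_0\bfbeta\cdot t_i\}$ drop out and one is left with
\[
\jump{\pi_0\bfbeta\cdot\nabla u_h} = \{\pi_0\bfbeta\cdot n_F\}\jump{\nabla u_h\cdot n_F} + \jump{\pi_0\bfbeta}\cdot\{\nabla u_h\}.
\]
The first term is exactly the quantity weighted in the CIP operator, while the second sees only the jump of the piecewise constant velocity.

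For the first term I would use that $\pi_0\bfbeta$ approximates $\bfbeta$ to first order, so $\|\{\pi_0\bfbeta\cdot n_F\}\|_{L^\infty(F)}\lesssim \|\bfbeta\cdot n_F\|_{L^\infty(F)} + h\|\bfbeta\|_{W^{1,\infty}(\Omega)}$; here the normalization $\|\bfbeta\|_{L^\infty}=1$ is essential, since it lets me bound the square $\|\{\pi_0\bfbeta\cdot n_F\}\|_{L^\infty(F)}^2$ by the single power $\|\bfbeta\cdot n_F\|_{L^\infty(F)}$ appearing in $s_h(u_h,u_h)$, plus an $O(h^2\|\bfbeta\|_{W^{1,\infty}(\Omega)}^2)$ correction. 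After the $h_F^2$ weighting the $\|\bfbeta\cdot n_F\|$-part sums exactly to $s_h(u_h,u_h)$. For the second term, and for the correction just produced, I would use $\|\jump{\pi_0\bfbeta}\|_{L^\infty(F)}\lesssim h\|\bfbeta\|_{W^{1,\infty}(\Omega)}$ together with the trace inequality \eqref{trace} and the inverse inequality \eqref{inverse}, giving $\|\{\nabla u_h\}\|_F^2\lesssim h_F^{-1}\|\nabla u_h\|_{\omega_F}^2\lesssim h_F^{-3}\|u_h\|_{\omega_F}^2$, so that after the $h_F^2$ weighting and summation over faces these contributions are bounded by $h\|\bfbeta\|_{W^{1,\infty}(\Omega)}^2\|u_h\|_\Omega^2$, matching the second term on the right-hand side.

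The main obstacle is the jump decomposition together with the velocity-power bookkeeping: it is the cancellation of the tangential jumps that allows the normal-only weight $\|\bfbeta\cdot n_F\|$ of the CIP operator to control the full jump of $\pi_0\bfbeta\cdot\nabla u_h$, and it is the normalization $U=1$ that reconciles the quadratic velocity factor coming from the averaging estimate with the linear factor present in $s_h$. A minor point to verify is the boundary: only interior faces enter both the averaging estimate and $s_h$, and since $u_h$ vanishes on $\partial\Omega$ while $\bfbeta\cdot n_{\partial\Omega}=0$ there, the field $\pi_0\bfbeta\cdot\nabla u_h$ is $O(h\|\bfbeta\|_{W^{1,\infty}(\Omega)})$ on $\partial\Omega$, so the interpolant may be chosen to satisfy the homogeneous boundary condition at the cost of a perturbation already of the admitted lower-order form.
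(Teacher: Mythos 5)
Your proof is correct and takes essentially the same route as the paper: reduce the infimum to weighted face jumps of $\pi_0\bfbeta\cdot\nabla u_h$ via the averaging interpolant (the paper cites \cite{BFH06} for this step), then split the jump so that tangential continuity of $u_h$ leaves only the $\bfbeta\cdot n_F$-weighted normal jump, absorbed into $s_h(u_h,u_h)$ using the normalization $\|\bfbeta\|_{L^\infty(Q)}=1$, plus $O(h\|\bfbeta\|_{W^{1,\infty}(\Omega)})$ remainders handled by trace and inverse inequalities. The only differences are cosmetic: the paper adds and subtracts the continuous field $\bfbeta$ inside the jump rather than invoking the average/jump product rule, and it defines $\pi_0\bfbeta$ through boundary-face averages so that $\pi_0\bfbeta\cdot n_{\partial\Omega}=0$ and hence $\pi_0\bfbeta\cdot\nabla u_h$ vanishes \emph{exactly} on $\partial\Omega$, whereas you treat the boundary faces as a perturbation (note your pointwise claim there should carry the extra inverse-inequality factor from $\nabla u_h\cdot n_{\partial\Omega}$, after which the contribution is indeed of the admitted lower-order form).
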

\begin{proof}
Let $\pi_0 \bfbeta$ be the projection onto element wise constants such
that for every $K$ that has no face on the boundary there holds
\[
\int_K \pi_0 \bfbeta ~\mbox{d}x = \int_K \bfbeta ~\mbox{d}x.
\]
On elements adjacent to the boundary define $\pi_0 \bfbeta$ by
\[
\int_{\partial K \cap \partial \Omega}  \pi_0 \bfbeta \cdot
n_{\partial \Omega} ~\mbox{d}x = \int_{\partial K \cap \partial \Omega}  \bfbeta \cdot
n_{\partial \Omega} ~\mbox{d}x
\]
and
\[
\int_{\partial K \cap \partial \Omega}  \pi_0 \bfbeta \cdot t_i
~\mbox{d}x = \int_{\partial K \cap \partial \Omega}  \bfbeta 
\cdot t_i ~\mbox{d}x,\quad  i=1,...,d-1.
\]
It follows from standard approximation that for all $K \in
\mathcal{T}_h$, $$\|\bfbeta - \pi_0 \bfbeta\|_{L^\infty(K)} \lesssim
\|\bfbeta\|_{W^{1,\infty}(K)} h_K.$$
Note that for any element $K$ with one face on the boundary there
holds 
$$
\pi_0 \bfbeta \cdot \nabla u_h\vert_{\partial \Omega \cap \partial K} = \underbrace{
\pi_0\bfbeta \cdot n_{\partial \Omega}}_{=0} \nabla u_h \cdot n_{\partial \Omega}
\vert_{\partial \Omega \cap \partial K} + \sum_{i=1}^{d} \pi_0
\bfbeta \cdot t_i (\underbrace{\nabla u_h \cdot t_i}_{=0})
\vert_{\partial \Omega \cap \partial K}  = 0.
$$
For $k=1$, if an element $K$ has two faces on the boundary, then $u_h
\equiv 0$ in that element and therefore $\pi_0 \bfbeta \cdot \nabla
u_h\vert_{\partial \Omega \cap \partial K} = 0$.
It then follows using the same arguments as in Ref. \refcite{BFH06} that
\[
\inf_{v_h \in V_h^0} \|h^{\frac12}(\pi_0 \bfbeta \cdot \nabla u_h -
v_h)\|_\Omega \leq \left(\sum_{F \in \mathcal{F}} \|h \jump{\pi_0 \bfbeta \cdot \nabla u_h}\|_F^2\right)^{\frac12}.
\]
By adding and subtracting $\bfbeta$ inside the jump and by applying a
triangle inequality followed by a trace inequality we have
\[
\left(\sum_{F \in \mathcal{F}} \|h \jump{\pi_0 \bfbeta \cdot \nabla
  u_h}\|_F^2\right)^{\frac12} \lesssim s_h(u_h,u_h)^{\frac12} + h^{\frac12}
\|\bfbeta\|_{W^{1,\infty}(\Omega)} \|u_h\|_{\Omega}
\]
and the proof is finished.
\end{proof}
The continuity \eqref{enhance_cont} is now obtained by adding and subtracting
$\pi_0 \bfbeta$ in the right slot of the left hand side of the equation
\begin{multline}\label{enhance_explain}
|(u-\pi_h u, \bfbeta \cdot \nabla v_h)_\Omega| \leq |(u-\pi_h u, (\bfbeta -
\pi_0 \bfbeta) \cdot \nabla v_h)_\Omega|+  \inf_{w_h \in V_h^0} |(u-\pi_h u, 
\pi_0 \bfbeta \cdot \nabla v_h - w_h)|_\Omega \\ \lesssim
  \|\bfbeta\|_{W^{1,\infty}(\Omega)} \|u-\pi_h u\|_{\Omega}
  \|v_h\|_{\Omega} \\
+ \|\bfbeta\|_{L^\infty(\Omega)} \|h^{-1/2} (u - \pi_h u)\|_{\Omega}  s_h(v_h,v_h).
\end{multline}
where a Cauchy-Schwarz inequality, an inverse inequality, the
approximation properties of $\pi_0 \bfbeta$ and Lemma \ref{mod_interpol_strong} have been used in the last inequality.
\begin{lemma}\label{sversuss}
Let 
\[
\bar s_h(u_h,v_h) := \gamma \sum_{F \in \mathcal{F}}\left<h^2_F
\|\overline \bfbeta \cdot n_F\|_{L^\infty(F)} \jump{\nabla u_h \cdot n_F} ,\jump{\nabla
v_h
\cdot n_F} \right>_{F}.
\]
Then
\[
 s_h(u_h,u_h) \lesssim \bar s_h(u_h,u_h) + h^{\frac12} \|\mu^{\frac12} \nabla u_h\|^2_\Omega
\]
and
\[
\bar s_h(u_h,u_h) \lesssim s_h(u_h,u_h) + h^{\frac12} \|\mu^{\frac12} \nabla u_h\|_\Omega^2.
\]
\end{lemma}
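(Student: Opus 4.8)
The plan is to prove both inequalities simultaneously, since they differ only by the direction of a single triangle inequality. The two forms $s_h$ and $\bar s_h$ are assembled from the \emph{same} face quantities $\langle h_F^2\,\jump{\nabla u_h\cdot n_F},\jump{\nabla u_h\cdot n_F}\rangle_F$ and differ only through the scalar weights $\|\bfbeta\cdot n_F\|_{L^\infty(F)}$ versus $\|\overline\bfbeta\cdot n_F\|_{L^\infty(F)}$. (The two index conventions, $F\in\partial K\setminus\partial\Omega$ summed over $K$ and $F\in\mathcal F$, describe the same collection of interior faces up to a harmless fixed multiplicity.) Hence it suffices to control, face by face, the difference of these two weights and then sum over $\mathcal F$.

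First I would apply the reverse triangle inequality in $L^\infty(F)$. Writing $\bfbeta=\overline\bfbeta+\bfbeta'$ gives, on each face,
\[
\bigl|\,\|\bfbeta\cdot n_F\|_{L^\infty(F)}-\|\overline\bfbeta\cdot n_F\|_{L^\infty(F)}\,\bigr|\le \|\bfbeta'\cdot n_F\|_{L^\infty(F)}\le \|\bfbeta'\|_{L^\infty(\Omega)}.
\]
Inserting this into the difference $s_h-\bar s_h$ shows at once that $s_h(u_h,u_h)\le \bar s_h(u_h,u_h)+R$ and, symmetrically, $\bar s_h(u_h,u_h)\le s_h(u_h,u_h)+R$, where
\[
R:=\gamma\,\|\bfbeta'\|_{L^\infty(\Omega)}\sum_{F\in\mathcal F}h_F^2\,\|\jump{\nabla u_h\cdot n_F}\|_F^2 .
\]
Everything therefore reduces to bounding the unweighted jump sum appearing in $R$.

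Next I would invoke the scale-separation hypothesis $\|\bfbeta'\|_{L^\infty(\Omega)}^2\sim\mu$, i.e. $\|\bfbeta'\|_{L^\infty(\Omega)}\lesssim\mu^{1/2}$, and discharge the face sum with the discrete trace inequality \eqref{trace} applied to the elementwise smooth field $\nabla u_h$: on each interior face $\|\jump{\nabla u_h\cdot n_F}\|_F^2\lesssim h_F^{-1}\|\nabla u_h\|_{\omega_F}^2$, with $\omega_F$ the union of the two elements sharing $F$. Summing over faces and using $h_F\sim h_K$ together with quasi-uniformity collapses the face sum into $\|\nabla u_h\|_\Omega^2$ with the leftover powers of $h$. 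Combining the $\mu^{1/2}$ coming from $\bfbeta'$ with the diffusion already present in the target norm $\|\mu^{1/2}\nabla u_h\|_\Omega^2=\mu\|\nabla u_h\|_\Omega^2$, and using that $h\le1$ and $\mu\lesssim1$, produces the stated diffusive remainder $h^{1/2}\|\mu^{1/2}\nabla u_h\|_\Omega^2$.

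The main obstacle is precisely this last bookkeeping of the $h$- and $\mu$-powers. The triangle-inequality step hands $R$ the factor $\|\bfbeta'\|_{L^\infty(\Omega)}$, which is \emph{not} small on its own; it is only through the scale separation $\|\bfbeta'\|_{L^\infty(\Omega)}^2\sim\mu$ that this factor becomes of order $\mu^{1/2}$ and can be matched against the diffusion in the weighted $H^1$-seminorm, so that the perturbation is genuinely of diffusive order and can be absorbed. This is exactly the mechanism that makes replacing $\bfbeta$ by $\overline\bfbeta$ in the stabilization operator a nonessential perturbation; the remaining ingredients (the trace and inverse estimates \eqref{trace}, \eqref{inverse}, and quasi-uniformity) are entirely routine.
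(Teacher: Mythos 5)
Your route mirrors the paper's proof---split $\bfbeta=\overline\bfbeta+\bfbeta'$, apply a triangle inequality to the face weights, then dispose of the fluctuation term via the scale separation and a trace inequality---but your final absorption step fails, and the failure is not cosmetic. Keeping the weight linear in $\|\bfbeta\cdot n_F\|_{L^\infty(F)}$, exactly as in the displayed definition of $s_h$, your remainder is
\[
R\;\lesssim\;\gamma\,\|\bfbeta'\|_{L^\infty(\Omega)}\sum_{F\in\mathcal F}h_F^2\,h_F^{-1}\|\nabla u_h\|^2_{\omega_F}\;\lesssim\;\gamma\,\mu^{\frac12}\,h\,\|\nabla u_h\|^2_\Omega,
\]
with $\omega_F$ the union of the two elements sharing $F$, and you then claim $\mu^{\frac12}h\,\|\nabla u_h\|^2_\Omega\lesssim h^{\frac12}\|\mu^{\frac12}\nabla u_h\|^2_\Omega=h^{\frac12}\mu\|\nabla u_h\|^2_\Omega$ ``using $h\le1$ and $\mu\lesssim1$''. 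That inequality is equivalent to $h\lesssim\mu$, i.e.\ $\mathrm{Pe}_h\lesssim1$, and the hypotheses $h\le 1$, $\mu\lesssim 1$ point in the wrong direction entirely. What your chain actually proves is
\[
R\;\lesssim\;\mathrm{Pe}_h^{\frac12}\,h^{\frac12}\,\|\mu^{\frac12}\nabla u_h\|^2_\Omega,
\]
which is short of the stated bound by a factor $\mathrm{Pe}_h^{\frac12}$ that is unbounded precisely in the high P\'eclet regime where the lemma is invoked (Theorem \ref{thm:apriori}).

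The paper's own proof does not suffer this shortfall because in its displays the fluctuation weight enters \emph{squared}: the face term there reads $\|h_F\|\bfbeta'\cdot n_F\|_{L^\infty(\Omega)}\jump{\nabla u_h\cdot n_F}\|_F^2=h_F^2\,\|\bfbeta'\cdot n_F\|^2_{L^\infty(\Omega)}\,\|\jump{\nabla u_h\cdot n_F}\|_F^2\lesssim h_F^2\,\mu\,h_F^{-1}\|\nabla u_h\|^2_{\omega_F}$, so each face contributes $h_F\|\mu^{\frac12}\nabla u_h\|^2_{\omega_F}$ and summing gives $h\|\mu^{\frac12}\nabla u_h\|^2_\Omega\le h^{\frac12}\|\mu^{\frac12}\nabla u_h\|^2_\Omega$ for $h\le 1$: the full power of $\mu$ is available from the start. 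With only one power of the weight (the reading you took, faithfully, from the definition of $s_h$), the scale separation $\|\bfbeta'\|^2_{L^\infty(\Omega)}\sim\mu$ can only ever yield $\mu^{\frac12}$, and no bookkeeping recovers the missing $\mu^{\frac12}$; the argument is then structurally unable to reach the claimed remainder. So either the stabilization weight must be understood as entering quadratically (which is how the paper's proof computes, and reveals a notational inconsistency between the paper's definition and its proof), or your last step needs an idea that neither you nor the literal definition supplies. As written, it is a genuine gap.
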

\begin{proof}
The proof for the upper and the lower bounds are similar, so we
consider only the first inequality. By using the decomposition of
$\bfbeta$ and a triangular inequality we have
\[
s_h(u_h,u_h) \lesssim \gamma \sum_{F \in \mathcal{F}} \|h_F \|\overline
\bfbeta\cdot n_F\|_{L^\infty(F)} \jump{\nabla u_h}\|_F^2 + \gamma \sum_{F \in \mathcal{F}} \|h_F \|
\bfbeta'\cdot n_F\|_{L^\infty(\Omega)} \jump{\nabla u_h}\|_F^2.
\]
Using now the size constraint on $\bfbeta'$ and a trace
inequality we conclude
\[
\gamma \sum_{F \in \mathcal{F}} \|h_F \|
\bfbeta'\cdot n_F\|_{L^\infty(\Omega)} \jump{\nabla u_h}\|_F^2 \lesssim
\gamma \sum_{K\in \mathcal{T}_h} \|h^{\frac12}_K\mu^{\frac12} \nabla u_h\|_K^2
\lesssim \gamma h^{\frac12} \|\mu^{\frac12} \nabla u_h\|^2_\Omega.
\]
\end{proof}
\subsubsection{Stability and convergence results}
We will now recall the main results on stability and convergence of
symmetric stabilization methods. These results are minor modifications
of those in Ref. \refcite{BF09}. 
For convenience we introduce a triple
norm associated to the stabilized method. Let
\[
\tnorm{u_h}^2_{h} := \int_I \left( \|\mu^{\frac12} \nabla u_h\|_\Omega^2 +
  s_h(u_h,u_h)\right) ~\mbox{d}t.
\]
Following the proof of Lemma \ref{stand_ene} it is straightforward to
derive the following stability and error estimates.
\begin{lemma}\label{stabilityCIP}(Stability)\\
Let $u_h$ be the solution of \eqref{FEMSYMSTAB}, with $\gamma\ge 0$. Then
\[
\sup_{t \in I} \|u_h(t)\|_\Omega+ \tnorm{u_h}_{h} \lesssim \int_0^T \|f\|_{\Omega} ~\mbox{d}t + \|u_0\|_\Omega.
\]
\end{lemma}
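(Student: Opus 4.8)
The plan is to mirror the energy argument of Lemma~\ref{stand_ene}, now testing the discrete equation \eqref{FEMSYMSTAB} against $v_h = u_h$ itself. First I would observe that $(\partial_t u_h, u_h) = \tfrac12 \tfrac{d}{dt}\|u_h\|_\Omega^2$, that the diffusive part of $a(\cdot,\cdot)$ contributes $\|\mu^{\frac12}\nabla u_h\|_\Omega^2$, and that the convective part vanishes: since $u_h\in V_h^0\subset H^1_0(\Omega)$, integrating by parts and using $\nabla\cdot\bfbeta = 0$ together with $\bfbeta\cdot n_{\partial\Omega} = 0$ gives $(\bfbeta\cdot\nabla u_h, u_h) = -\tfrac12((\nabla\cdot\bfbeta)u_h,u_h) = 0$, any weakly imposed (Nitsche-type) boundary contributions being rendered coercive by the hypothesis $\gamma_{bc}\geq c_0$ with $c_0$ large enough. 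Because $s_h(\cdot,\cdot)$ is positive semidefinite by assumption, the stabilization term only adds a nonnegative quantity. This yields the pointwise-in-time identity
\[
\tfrac12 \tfrac{d}{dt}\|u_h\|_\Omega^2 + \|\mu^{\frac12}\nabla u_h\|_\Omega^2 + s_h(u_h,u_h) = (f,u_h).
\]

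Next I would integrate over $[0,t^*]$ for an arbitrary $t^*\leq T$, use the $L^2$-stability of the projection $\|u_h(0)\|_\Omega = \|\pi_h u_0\|_\Omega \leq \|u_0\|_\Omega$, and bound the right-hand side exactly as in Lemma~\ref{stand_ene} by
\[
\int_0^{t^*}(f,u_h)~\mbox{d}t \leq \sup_{t\in I}\|u_h(t)\|_\Omega \int_0^T \|f\|_\Omega~\mbox{d}t.
\]
Dropping the nonnegative dissipative and stabilization integrals and taking the supremum over $t^*$ then gives a quadratic inequality for $M := \sup_{t\in I}\|u_h(t)\|_\Omega$, namely $\tfrac12 M^2 \leq \tfrac12\|u_0\|_\Omega^2 + M\int_0^T\|f\|_\Omega\,\mbox{d}t$, from which $M \lesssim \int_0^T\|f\|_\Omega\,\mbox{d}t + \|u_0\|_\Omega$ follows.

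Finally, I would substitute this bound on $M$ back into the energy identity evaluated at $t^* = T$, now retaining the dissipative and stabilization integrals, to obtain
\[
\tnorm{u_h}_h^2 = \int_I \bigl(\|\mu^{\frac12}\nabla u_h\|_\Omega^2 + s_h(u_h,u_h)\bigr)~\mbox{d}t \leq \tfrac12\|u_0\|_\Omega^2 + M\int_0^T\|f\|_\Omega~\mbox{d}t,
\]
whence $\tnorm{u_h}_h \lesssim \int_0^T\|f\|_\Omega\,\mbox{d}t + \|u_0\|_\Omega$; adding the two estimates gives the claim.

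The computation is essentially routine; the only point requiring genuine care is the treatment of the convection and boundary terms. The crux is verifying that the skew-symmetry of the convection operator survives at the discrete level — which it does here precisely because $V_h^0\subset H^1_0(\Omega)$, $\nabla\cdot\bfbeta = 0$ and $\bfbeta\cdot n_{\partial\Omega} = 0$ — and that any boundary penalty is coercive for $\gamma_{bc}$ large enough. Once those facts are in hand the rest is the standard energy bootstrap already used in Lemma~\ref{stand_ene}, with the stabilization $s_h$ simply riding along as a nonnegative sink.
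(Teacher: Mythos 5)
Your proof is correct and takes essentially the same route as the paper, whose own justification is simply that the estimate follows by repeating the energy argument of Lemma~\ref{stand_ene} for the discrete formulation: test with $v_h = u_h$, use skew-symmetry of the convection term (exact here since $V_h^0 \subset H^1_0(\Omega)$, $\nabla\cdot\bfbeta = 0$, $\bfbeta\cdot n_{\partial\Omega}=0$), and let the positive semidefinite $s_h$ ride along as a dissipative term. Your fleshed-out version --- the quadratic inequality for $\sup_{t\in I}\|u_h(t)\|_\Omega$ followed by back-substitution to control $\tnorm{u_h}_h$ --- is exactly the intended bootstrap.
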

Applying stability to the error equation leads to the following error estimate:
\begin{theorem}\label{convergenceCIP}(Convergence CIP-method)\\
Let $u\in L^2(I;H^{r}(\Omega))$, $r\ge1$, be the solution of \eqref{weakmodel} and $u_h$ the
solution of \eqref{FEMSYMSTAB} {with $\gamma>0$}. Then there holds
\begin{multline*}
\sup_{t \in I} \|(u-u_h)(t)\|_\Omega+ \tnorm{u-u_h}_{h} \\ 
\lesssim 
(\tau_F^{-1} T^{\frac12} h +
(\tau_F^{-\frac12} h^{\frac12} +{ \gamma^{\frac12}+ \gamma^{-\frac12}}) h^{\frac12} + \mu^{\frac12})
h^{s-1}  |u|_{L^2(I;H^{s}(\Omega))},
\end{multline*}
with $s=\min(r,k+1)$ and where we used that $\|\bfbeta\|_{L^\infty(Q)}
\sim 1$.
\end{theorem}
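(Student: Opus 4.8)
The plan is to follow the standard energy argument for symmetric stabilization (as in \cite{BF09}), the point being to keep explicit track of how the multiscale splitting of $\bfbeta$ enters the constants. First I would split the error as $u-u_h=\eta+e_h$ with $\eta:=u-\pi_h u$ and $e_h:=\pi_h u - u_h\in V_h$, treat $\eta$ purely by the approximation estimates \eqref{approx} and \eqref{weakconsist}, and put all the work into estimating the discrete component $e_h$ in the norm $\sup_t\|\cdot\|_\Omega+\tnorm{\cdot}_h$.

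Next I would derive the error equation. Since the exact solution satisfies \eqref{weakmodel} for every $v_h\in V_h$, subtracting \eqref{FEMSYMSTAB} gives Galerkin orthogonality, and after writing $s_h(u_h,v_h)=s_h(\pi_h u,v_h)-s_h(e_h,v_h)$ the equation for $e_h$ reads, for all $v_h$,
\[
(\partial_t e_h,v_h)+a(e_h,v_h)+s_h(e_h,v_h)=-(\partial_t\eta,v_h)-a(\eta,v_h)+s_h(\pi_h u,v_h).
\]
Testing with $v_h=e_h$ and integrating in time, the left-hand side becomes $\tfrac12\|e_h(t)\|_\Omega^2+\int_0^t(\mu\|\nabla e_h\|^2+s_h(e_h,e_h))\,\mathrm{d}t$, because the convective term is skew-symmetric, $(\bfbeta\cdot\nabla e_h,e_h)=0$ by $\nabla\cdot\bfbeta=0$ and $\bfbeta\cdot n=0$, and $e_h(0)=0$. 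On the right, the time-derivative term vanishes, $(\partial_t\eta,e_h)=0$, since $\pi_h$ is the $L^2$-projection and $\partial_t\eta\perp V_h$; the diffusive part $\mu(\nabla\eta,\nabla e_h)$ is handled by Young's inequality, absorbing $\mu\|\nabla e_h\|^2$ and leaving $\mu\|\nabla\eta\|^2$; and the weak-consistency term is bounded by $s_h(\pi_h u,\pi_h u)^{1/2}s_h(e_h,e_h)^{1/2}$, again absorbing $s_h(e_h,e_h)$ and leaving $s_h(\pi_h u,\pi_h u)$ for \eqref{weakconsist}.

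The delicate term is the convective consistency error $(\bfbeta\cdot\nabla\eta,e_h)$, and this is where \eqref{enhance_cont} and the scale separation are essential. I would integrate by parts, using $\bfbeta\cdot n=0$ and $\nabla\cdot\bfbeta=0$, to rewrite it as $-(\eta,\bfbeta\cdot\nabla e_h)$; a naive Cauchy--Schwarz here would cost a full inverse power through \eqref{inverse} and destroy the rate, so instead I apply the enhanced continuity \eqref{enhance_cont} after splitting $\bfbeta=\overline\bfbeta+\bfbeta'$. For the coarse part I use \eqref{enhance_cont} with $\overline\bfbeta$ (swapping $s_h$ for $\bar s_h$ at the higher-order price of Lemma \ref{sversuss}), producing a first contribution $\|\overline\bfbeta\|_{W^{1,\infty}}\|\eta\|_\Omega\|e_h\|_\Omega$ and a second contribution $\|h^{-1/2}\eta\|_\Omega\,\bar s_h(e_h,e_h)^{1/2}$; for the fine part I would \emph{not} integrate by parts but use the small amplitude $\|\bfbeta'\|_{L^\infty}\sim\mu^{1/2}$ directly, so that $(\bfbeta'\cdot\nabla\eta,e_h)\le\|\mu^{1/2}\nabla\eta\|_\Omega\|e_h\|_\Omega$ and the (large) fine-scale gradient never appears. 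The factor $\bar s_h(e_h,e_h)^{1/2}$ is absorbed by Young, leaving $\|h^{-1/2}\eta\|_\Omega^2$, which \eqref{approx} controls by $(h^{1/2}+\mu^{1/2})h^{s-1}|u|_{H^s}$ and so accounts for the $h^{s-1/2}$ and $\mu^{1/2}h^{s-1}$ contributions.

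The remaining zeroth-order terms carry $\|e_h\|_\Omega$ rather than an energy norm, and to obtain the stated polynomial-in-$T$ constant rather than an exponential one I would avoid Gronwall altogether: I pull $\sup_t\|e_h\|_\Omega$ out of the time integral, apply Cauchy--Schwarz in $t$ (which generates the factor $T^{1/2}$), and absorb the resulting $\sup_t\|e_h\|_\Omega^2$ into the left-hand side after taking the supremum over $t$. This turns the coarse contribution into $\|\overline\bfbeta\|_{W^{1,\infty}}T^{1/2}\|\eta\|_{L^2(I;L^2)}$, i.e. the term $\tau_F^{-1}T^{1/2}h^{s}|u|_{L^2(I;H^{s})}$ once the normalization and $\|\eta\|_\Omega\lesssim h^s|u|_{H^s}$ are inserted. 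Collecting the absorbed $e_h$-quantities on the left, inserting \eqref{approx} and \eqref{weakconsist}, and closing with a triangle inequality to pass from $e_h$ back to $u-u_h$ (adding $\sup_t\|\eta\|_\Omega+\tnorm{\eta}_h$, both of optimal order) yields the claim. The hard part will be precisely the convective consistency term: the whole purpose of the symmetric stabilization is that \eqref{enhance_cont} trades the uncontrollable $h^{-1}\|\bfbeta\cdot\nabla e_h\|$ against $s_h(e_h,e_h)^{1/2}$, while the scale splitting ensures that only the moderate coarse Lipschitz constant $\tau_F^{-1}$, and not $\|\bfbeta'\|_{W^{1,\infty}}$, survives in the bound.
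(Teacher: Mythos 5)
Your proposal is correct and takes essentially the same route as the paper's proof: the splitting $u-u_h=\eta+e_h$ with $\eta=u-\pi_h u$, the energy identity from testing the error equation with $e_h$ (using $e_h(0)=0$ and skew-symmetry of convection), Galerkin orthogonality plus $L^2$-projection orthogonality to eliminate $(\partial_t\eta,e_h)$, and the scale-split treatment of the convective consistency term via \eqref{enhance_cont} for $\overline\bfbeta$ (with Lemma \ref{sversuss}) together with the amplitude bound on $\bfbeta'$. The only cosmetic deviations are that the paper keeps the derivative on $e_h$ in the fine-scale term, giving $\tau_F^{-\frac12}\|\eta\|_\Omega\tnorm{e_h}_h$ instead of your $\|\mu^{\frac12}\nabla\eta\|_\Omega\|e_h\|_\Omega$, and that it closes with Gronwall's lemma applied to a $T^{-1}\|e_h\|^2_Q$ term --- which produces only a fixed factor $e$, not exponential growth, so your sup-absorption argument avoiding Gronwall is an equally valid (and equally sharp) alternative.
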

\begin{proof}
For simplicity we only give the proof in the form of a final time result.
Let $e_h := u_h - \pi_h u$, using the same arguments as for the
stability Lemma \ref{stand_ene} we have
\[
\frac12 \|e_h(T)\|^2_\Omega + \tnorm{e_h}^2_h = \int_I  [(\partial_t e_h,
e_h)_{\Omega} + a(e_h,e_h) + s_h(e_h,e_h) ] ~\mbox{d}t.
\] 
Using Galerkin orthogonality and the orthogonality of the
$L^2$-projection we have
\[
\frac12 \|e_h(T)\|^2_\Omega + \tnorm{e_h}^2_h = \int_I  [ a(u-\pi_h
u,e_h)  - s_h(\pi_h u,e_h) ] ~\mbox{d}t.
\] 
Using the decomposition of the velocity we have
\[
a(u-\pi_h u,e_h) \lesssim (u - \pi_h u, \overline \bfbeta \cdot \nabla
e_h)_\Omega + (u - \pi_h u, \bfbeta' \cdot \nabla e_h) _\Omega + \|\mu^{\frac12} \nabla
(u - \pi_h u)\|_\Omega \|\mu^{\frac12} \nabla
e_h\|_\Omega
\]
and using the continuity \eqref{enhance_cont} in the first term of the
right hand side and the bound on $\bfbeta'$ in the second we have
\begin{multline*}
a(u-\pi_h u,e_h) \lesssim \|\nabla \overline \bfbeta\|_{L^\infty(\Omega)} \| (u
- \pi_h u)\|_\Omega \|e_h\|_{\Omega} \\
+ (\|\overline
\bfbeta\|^{\frac12}_{L^\infty(\Omega)} \|h^{-1/2} (u - \pi_h u)\|_\Omega  +
\tau_F^{-\frac12} \|u - \pi_h u\|_\Omega  + \|\mu^{\frac12}
\nabla (u - \pi_h u)\|_\Omega) \tnorm{e_h}_h.
\end{multline*}
It follows after a Cauchy-Schwarz inequality and an
arithmetic-geometric inequality in the right hand side that
\begin{multline*}
\|e_h(T)\|^2_\Omega + \tnorm{e_h}^2_h \lesssim  s_h(\pi_h u, \pi_h u)
+ \|\nabla \overline
\bfbeta\|^2_{L^\infty(Q)} T \|h^{-1/2} (u - \pi_h u)\|^2_Q  \\ +
(\tau_F^{-1}+ \|\bfbeta\|_{L^\infty(Q)} h^{-1}) \|u - \pi_h u\|^2_Q + \|\mu^{\frac12}
\nabla (u - \pi_h u)\|^2_Q + T^{-1} \|e_h\|^2_{Q}. 
\end{multline*}
We conclude by applying Gronwall's lemma and the approximation results
of \eqref{approx} and \eqref{weakconsit}.
\end{proof}
For high mesh P\'eclet numbers this estimate is sub optimal optimal
with $O(h^{\frac12})$ in the
$L^\infty(I;L^2(\Omega))$-norm and for low mesh P\'eclet numbers it is optimal in the
$L^2(I;H^1(\Omega))$-norm. In the latter case the convergence in the
$L^2$-norm can be improved under certain assumptions on the time
variation of $\bfbeta$\cite{Thom97}. Note that this estimate
does not have exponential growth, however in the high P\'eclet case that factor is hidden in the
Sobolev norm of the exact solution. Combining this
convergence result with the regularity result of Theorem
\ref{forward_stability} we may prove the following estimate that is
fully independent of $\mu$, in the sense that we also control the 
Sobolev norm in the constant. Note however that smoothness of the
source term and the initial data is required.
\begin{corollary}
Let $f \in L^2(I;H^1_0(\Omega))$ and $u_0 \in H^1_0(\Omega)$, let $u$
denote the solution of \eqref{weakmodel} and $u_h$ the solution of
\eqref{FEMSYMSTAB},{ with $\gamma>0$} and assume that $\mathrm{Pe}_h>>1$. Then
\[
\|(u - u_h)(T)\|_\Omega \lesssim C_T (T^{\frac12}  (1+\tau_F^{-1})+ 1) h^{\frac12} (\| f\|_{L^2(I;H^1(\Omega))} + \|\nabla u_0\|_\Omega).
\]
\end{corollary}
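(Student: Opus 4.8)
The plan is to read this estimate as a corollary that marries the a priori convergence bound of Theorem \ref{convergenceCIP} with the $\mu$-robust regularity estimate of Corollary \ref{ellipticreg} (itself a consequence of Theorem \ref{forward_stability}). The observation that makes the two fit together is that Theorem \ref{convergenceCIP} measures the error through $|u|_{L^2(I;H^{s}(\Omega))}$, whereas Corollary \ref{ellipticreg} controls precisely the \emph{weighted} quantity $\mu^{\frac12}|u|_{L^2(I;H^2(\Omega))}$ by a constant $C_T=e^{T/\tau_F}$ that stays moderate under the scale-separation hypothesis. The whole game is to let the $\mu$-weight hidden in the regularity bound compensate the diffusive factor carried by the consistency terms of the error estimate.

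First I would specialise Theorem \ref{convergenceCIP} to piecewise affine elements, $k=1$. Since $f\in L^2(I;H^1_0(\Omega))$, $u_0\in H^1_0(\Omega)$ and $\Omega$ is convex, Corollary \ref{ellipticreg} guarantees $u\in L^2(I;H^2(\Omega))$, so $s=\min(r,k+1)=2$, $h^{s-1}=h$, and
\[
\|(u-u_h)(T)\|_\Omega\lesssim\bigl(\tau_F^{-1}T^{\frac12}h+(\tau_F^{-\frac12}h^{\frac12}+1)h^{\frac12}+\mu^{\frac12}\bigr)\,h\,|u|_{L^2(I;H^2(\Omega))}.
\]
Next I would use the hypothesis $\mathrm{Pe}_h\gg1$, i.e.\ $\mu^{\frac12}\lesssim h^{\frac12}$, to absorb the diffusion-consistency contribution $\mu^{\frac12}$ into the $1\cdot h^{\frac12}$ term, and then factor $h^{\frac12}$ out of the bracket, bounding it by $h^{\frac12}\bigl(1+\tau_F^{-\frac12}h^{\frac12}+\tau_F^{-1}T^{\frac12}h^{\frac12}\bigr)\lesssim h^{\frac12}\bigl(T^{\frac12}(1+\tau_F^{-1})+1\bigr)$, which already produces the prefactor announced in the claim.

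It then remains to control $h\,|u|_{L^2(I;H^2(\Omega))}$ robustly. Here I would invoke Corollary \ref{ellipticreg} with $\frakh=1$, giving $\mu^{\frac12}|u|_{L^2(I;H^2(\Omega))}\lesssim C_T\bigl(\|f\|_{L^2(I;H^1(\Omega))}+\int_I\|f\|\,\mbox{d}t+\|u_0\|_\frakh\bigr)$, and the right-hand side collapses to $C_T(\|f\|_{L^2(I;H^1(\Omega))}+\|\nabla u_0\|_\Omega)$ after using $\int_I\|f\|\,\mbox{d}t\lesssim T^{\frac12}\|f\|_{L^2(I;H^1(\Omega))}$ together with $\|u_0\|_\frakh=(\|\nabla u_0\|_\Omega^2+\|u_0\|_\Omega^2)^{\frac12}\lesssim\|\nabla u_0\|_\Omega$ (Poincar\'e on $H^1_0(\Omega)$). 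Writing $h\,|u|_{L^2(I;H^2)}=h\mu^{-\frac12}\cdot\mu^{\frac12}|u|_{L^2(I;H^2)}$ and inserting these bounds finishes the estimate, the exponential factor $C_T$ being exactly the one appearing in the statement.

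The step I expect to be the genuine obstacle — and the reason the \emph{unfiltered} $L^2$-error can be handled robustly here at all — is the control of the factor $h\,|u|_{L^2(I;H^2(\Omega))}$, equivalently of the top-order consistency term $\|h^{-1/2}(u-\pi_h u)\|_Q$ hidden in Theorem \ref{convergenceCIP}. This is the only contribution that does not already carry a diffusive weight, so it cannot be tamed by the naive, $\mu$-dependent regularity of Theorem \ref{naive_regularity}; it is precisely the robust bound $\mu^{\frac12}|u|_{L^2(I;H^2)}\lesssim C_T(\cdots)$ of Corollary \ref{ellipticreg}, valid with no inverse power of $\mu$ thanks to the scale separation, that is decisive. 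One must still keep track of the residual factor $h\mu^{-1/2}$ generated by this pairing: it is harmless in the resolved high-P\'eclet range underlying the statement and leaves the clean rate $h^{\frac12}$ intact, while under the standing assumption $\tau_F\sim1$ the constant $C_T=e^{T/\tau_F}$ remains moderate.
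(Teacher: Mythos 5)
There is a genuine gap, and it sits exactly where you yourself predicted the ``genuine obstacle'' would be. Writing $h\,|u|_{L^2(I;H^2(\Omega))} = h\mu^{-1/2}\cdot\mu^{1/2}|u|_{L^2(I;H^2(\Omega))}$ and invoking Corollary \ref{ellipticreg} leaves the factor $h\mu^{-1/2} = h^{1/2}\mathrm{Pe}_h^{1/2}$, and this factor is \emph{not} harmless under the standing hypothesis $\mathrm{Pe}_h\gg 1$: in that regime $\mu\ll h$, so $h\mu^{-1/2}\gg h^{1/2}$, and it diverges as $\mu\to 0$ at fixed $h$. Your final bound therefore reads $\|(u-u_h)(T)\|_\Omega \lesssim C_T(\cdots)\,h^{1/2}\,\mathrm{Pe}_h^{1/2}\,(\|f\|_{L^2(I;H^1(\Omega))}+\|\nabla u_0\|_\Omega)$, which reintroduces precisely the inverse power of $\mu$ that the corollary is designed to eliminate; the claim that this factor ``leaves the clean rate $h^{\frac12}$ intact'' is backwards, since $h\mu^{-1/2}\le h^{1/2}$ holds only when $\mu\ge h$, i.e.\ in the low-P\'eclet regime, the opposite of the assumption. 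A secondary problem: Corollary \ref{ellipticreg} requires $\Omega$ convex, an assumption absent from the statement you are proving, so the route through $s=2$ and elliptic regularity is not available in any case.

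The fix, and the paper's intended argument (``an immediate consequence of Theorem \ref{convergenceCIP} and Theorem \ref{forward_stability}''), is to apply Theorem \ref{convergenceCIP} with $s=1$, not $s=2$. Then $h^{s-1}=1$ and the Sobolev factor is only $|u|_{L^2(I;H^1(\Omega))}=\|\nabla u\|_Q$; the rate $h^{\frac12}$ is supplied by the bracket itself, since for $\mathrm{Pe}_h\gg1$ one has $\mu^{\frac12}\lesssim h^{\frac12}$ and
\[
\tau_F^{-1}T^{\frac12}h + (\tau_F^{-\frac12}h^{\frac12}+1)h^{\frac12}+\mu^{\frac12} \lesssim \left(T^{\frac12}(1+\tau_F^{-1})+1\right)h^{\frac12}.
\]
It remains to bound $\|\nabla u\|_Q$ uniformly in $\mu$, and this is exactly what the term $T^{-\frac12}\|\frakh^{\frac12}\nabla u\|_Q$ on the left-hand side of \eqref{mu_indep_est} provides: taking $\frakh=1$ in Theorem \ref{forward_stability} (no convexity needed) gives
\[
\|\nabla u\|_Q \lesssim T^{\frac12}C_T\left(\|f\|_{L^2(I;H^1(\Omega))}+\int_I\|f\|\,\mbox{d}t+\|u_0\|_1\right) \lesssim C_T\left(\|f\|_{L^2(I;H^1(\Omega))}+\|\nabla u_0\|_\Omega\right),
\]
using Poincar\'e and Cauchy--Schwarz in time with $T\sim1$, exactly as in your own reduction of the data terms. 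Inserting this into the $s=1$ estimate yields the claim with no inverse power of $\mu$. The lesson is that the corollary deliberately trades one order of $h$ in approximation (using only $H^1$ rather than $H^2$ regularity of $u$) for full robustness in $\mu$; your pairing keeps the extra order of $h$ but pays $\mathrm{Pe}_h^{1/2}$ for it, which is exactly the wrong currency in the regime the statement addresses.
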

\begin{proof}
An immediate consequence of the Theorem \ref{convergenceCIP} and Theorem
\ref{forward_stability}.
\end{proof}
\section{Perturbation equation and the dual problem}
The error analysis in weak norms uses a perturbation equation and an associated dual
problem. 
Taking the difference of the two
formulations \eqref{weakmodel} and \eqref{FEMGS}, setting $e=u-u_h$ and integrating by parts we obtain
\[
(\partial_t e,\varphi)_\Omega+a(e,\varphi) = (e(T),\varphi(T))_\Omega -
(e(0),\varphi(0))_\Omega - (e,\partial_t \varphi + \bfbeta \cdot \nabla \varphi)_\Omega
+ (\mu \nabla e, \nabla \varphi)_\Omega.
\]
This suggests the adjoint equation, find $\varphi \in H^1_0(\Omega)$
such that
\begin{equation}\label{dualproblem}
\begin{array}{rcl}
-\partial_t \varphi - \bfbeta \cdot \nabla \varphi - \mu \Delta \varphi
&=& 0\mbox{ in } Q\\ 
\varphi  &=& 0 \mbox{ on } \partial \Omega \times I\\ 
\varphi(\cdot,T) &=& \Psi(\cdot) \mbox{ in } \Omega,
\end{array}
\end{equation}
{where $\Psi \in H^1_0(\Omega)$.}
Then the following error representation holds
\begin{equation}\label{error_rep}
(e(T),\Psi)_\Omega =  (e(0),\varphi(0))_\Omega + (\partial_t e,\varphi)_\Omega+a(e,\varphi).
\end{equation}
We will now proceed and discuss
the choice of $\Psi$ and the associated stability estimates on
$\varphi$.
\subsection{Regularization of the error and weak norms}
Since it appears not to be possible to prove a posteriori error
estimates in the $L^2$-norm independently of the P\'eclet number,
unless one ressorts to a saturation assumption, we will here consider
a regularized error, where a parameter $\frakh$ (that may ultimately depend on
$h$) sets the scale of the regularization. We recall the problem \eqref{reg_error}
for a given computational error $e:=u-u_h$,
find $\tilde e$ such that $\tilde e\vert_{\partial \Omega} = 0$ and
\[
- \frakh \Delta \tilde e + \tilde e  = e(\cdot,T).
\]
On weak form the problem writes: find
$\tilde e \in H^1_0(\Omega)$ such that
\begin{equation}\label{weakerror}
(\frakh \nabla \tilde e, \nabla v)_\Omega + (\tilde e , v)_\Omega = (e,v)_\Omega,\quad \forall v
\in H^1_0(\Omega).
\end{equation}
This is what is commonly called a Helmholtz filter or a differential
filter, although it is not properly speaking a filter. The key observation here is
that when $\frakh$ is small, the filtered solution is close to the 
solution where ever the solution is smooth. Close to layers or other strongly
localised features of $e$, $\tilde u - u$ may be large locally, also
for $\frakh$ small. Associated with the problem \eqref{weakerror} we
have the norm
\[
\|\tilde e\|^2_{\frakh}:= \|\frakh^{\frac12} \nabla \tilde e\|_\Omega^2 + \|\tilde e\|_\Omega^2
\]
and an associated relation, obtained by testing \eqref{weakerror} with
$v = \tilde e$,
\begin{equation}\label{normrep}
\|\tilde e\|^2_{\frakh} = (e,\tilde e)_\Omega.
\end{equation}
We deduce from \eqref{normrep} that choosing $\Psi = \tilde e$ in \eqref{dualproblem} above leads to
the following error representation for $\|\tilde e\|^2_{\frakh}$:
\begin{equation}\label{error_repsmooth}
\|\tilde e\|^2_{\frakh} =  (e(0),\varphi(0))_\Omega + (\partial_t e,\varphi)_\Omega+ a(e,\varphi).
\end{equation}
\subsection{Stability of the dual solution}
The advantage of using the dual technique is that instead of relying
on regularity estimates for the exact solutions we can use regularity of the
adjoint equation, which may be better behaved provided the data is
well chosen. The following Theorem gives a precise characterization of
the regularity of the dual problem in the multiscale framework.
\begin{theorem}\label{dual_stability}
Let $\varphi(x,t)$ be the weak solution of \eqref{error_rep}, with
$\Psi = \tilde e$, {where $\tilde e$ is defined by
  \eqref{weakerror}, with $u$ and $u_h$ solutions of \eqref{weakmodel} and
\eqref{FEMSYMSTAB} respectively.} Then
there holds
\[
{\sup_{t \in I} \|\varphi(\cdot,t)\|_\frakh} + T^{-1} \|\frakh^{\frac12} \nabla \varphi\|_Q +
T^{-1} \|\frakh^{\frac12} \partial_t \varphi\|_Q+
|(\frakh \mu)^{\frac12} \varphi|_{L^2(I;H^2(\Omega))} \lesssim C_T
\|\tilde e\|_\frakh,
\] 
with
$
C_T = e^{\left( c_{\Lambda} \frac{T}{\tau_F} \right) }
$
where $\tau_F$ is given by \eqref{chartime} and $c_{\Lambda}$ is a
moderate constant.
\end{theorem}
\begin{proof}
The dual problem is equivalent to the forward problem after a change
of variable $\tilde t = T - t$ and $\tilde x = -x$, with the source term
$f=0$ and the initial data $u_0 = \tilde e \in H^1_0(\Omega)$. The result then follows
from Theorem \ref{forward_stability}.
\end{proof}
\section{Error estimates}
In this section we will prove estimates for $\|\tilde
e\|_{\frakh}$ where the constant is robust in $\mu$ (under our assumptions on
the data).We will only consider the case of semi discretization in space and
show how to prove an a posteriori error estimate, where the stability
constant is essentially $C_T$ of Theorem \ref{dual_stability}. The a priori error
estimate then follows using the fact that the a posteriori residuals
are a priori controlled by the discrete stability estimate of Lemma
\ref{stabilityCIP}. Then we will
consider the case of insufficient data, i.e. when only $\overline
\bfbeta$ is known, and show that under our assumptions on data we can
obtain
an upper bound of the error also in this case, where a nonconsistent
part limits the asymptotic convergence. In the regime that we are
interested in however this part is smaller than the discretization
error.

{In the low mesh P\'eclet number regime, we need to modify our estimate to
obtain optimality. In particular we need to use elliptic regularity to
obtain optimality. We have not written the two estimates in a unified
manner, since the natural form of the residual quantities uses
different norms. We outline the differences for estimation in the low
mesh P\'eclet number regime in a remark below.}

\subsection{A posteriori and a priori error estimates} 
To prove a posteriori error estimates we use a
duality technique together with the a
priori control of the dual solution. In practice one may need to ressort to
numerical solution of the dual problem.

\begin{theorem}(A posteriori error estimate)\label{thm:apost}
Let $\tilde e$ be defined by \eqref{weakerror}, with $u$ and $u_h$ solutions of \eqref{weakmodel} and
\eqref{FEMSYMSTAB} respectively. Then there holds
\begin{multline}\label{aposteriori1}
\|\tilde e\|_\frakh \lesssim C_T \left( \frac{h}{\frakh} \right)^{\frac12}
\Bigl(\int_I \inf_{v_h \in V_h^0} \|h^{\frac12}(\bfbeta \cdot \nabla u_h -
v_h)\|_\Omega ~\mbox{d}t \\+\int_I \left(\inf_{v_h \in V_h^0} \sum_{K \in \mathcal{T}_h}
  {\|h^{\frac12}(f + \mu \Delta u_h - v_h)\|^2_K} \right)^{\frac12} \mbox{d}t+
 \int_I \left(\sum_{F \in \mathcal{F}} \|\mu  \jump{\nabla u_h\cdot n_F}\|^2_{F}
 \right)^{\frac12}\mbox{d}t\\
+ \int_I s_h(u_h,u_h)^{\frac12} \mbox{d}t + h^{\frac12} \|u_0 - \pi_h u_0\|_\Omega\Bigr).
\end{multline}
The constant $C_T$ is defined in Theorem \ref{forward_stability}.
\end{theorem}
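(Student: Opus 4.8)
The plan is to combine the error representation \eqref{error_repsmooth} with the discrete stability of the dual solution from Theorem~\ref{dual_stability}, using Galerkin orthogonality to trade $\varphi$ for its projection error $\psi:=\varphi-\pi_h\varphi$, where $\pi_h$ is the $L^2$-projection onto $V_h$. Starting from $\|\tilde e\|^2_{\frakh} = (e(0),\varphi(0)) + (\partial_t e,\varphi)+ a(e,\varphi)$, I would first rewrite the last two terms as the residual of $u_h$ tested against $\varphi$: since the exact solution satisfies \eqref{weakmodel} and $e=u-u_h$,
\[
(\partial_t e,\varphi)+a(e,\varphi) = \int_I\big[(f,\varphi)-(\partial_t u_h,\varphi)-a(u_h,\varphi)\big]\,\mbox{d}t .
\]
The stabilized scheme \eqref{FEMSYMSTAB}, tested with $\pi_h\varphi\in V_h$, gives the orthogonality $(f,\pi_h\varphi)-(\partial_t u_h,\pi_h\varphi)-a(u_h,\pi_h\varphi)=s_h(u_h,\pi_h\varphi)$. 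Subtracting this, the residual splits into $\int_I s_h(u_h,\pi_h\varphi)\,\mbox{d}t$ and the same residual tested against $\psi$.

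The orthogonality $\psi\perp V_h$ now does the bookkeeping. Because $V_h$ is time-independent, $\partial_t u_h(\cdot,t)\in V_h$ and the term $(\partial_t u_h,\psi)$ vanishes identically, which is why no time-derivative residual survives; likewise $(f,\psi)=(f-\pi_h f,\psi)$, producing the data-oscillation contribution. In the convective part I would write $(\bfbeta\cdot\nabla u_h,\psi)=(\bfbeta\cdot\nabla u_h-v_h,\psi)$ for an arbitrary $v_h\in V_h$, and in the diffusive part I integrate by parts element by element,
\[
-(\mu\nabla u_h,\nabla\psi)=\sum_{K}(\mu\Delta u_h,\psi)_K-\sum_{F\in\mathcal{F}}\langle\mu\jump{\nabla u_h\cdot n_F},\psi\rangle_F ,
\]
using $\psi\vert_{\partial\Omega}=0$ so that only interior-face jumps remain, and again insert an arbitrary $v_h$ in the volume term through $\sum_K(\mu\Delta u_h,\psi)_K=\sum_K(\mu\Delta u_h-v_h,\psi)_K$. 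This exposes exactly the four residual contributions and the stabilization term of \eqref{aposteriori1}.

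It then remains to apply Cauchy--Schwarz on each term and to absorb every $\varphi$-factor into $C_T\|\tilde e\|_\frakh$. The key observation is that each dual factor reduces to $\sup_{t\in I}\|\nabla\varphi\|_\Omega$: by standard $L^2$-projection estimates $\|\psi\|_\Omega\lesssim h\|\nabla\varphi\|_\Omega$, by a trace estimate $\big(\sum_F\|\psi\|_F^2\big)^{1/2}\lesssim h^{1/2}\|\nabla\varphi\|_\Omega$, and by $H^1$-stability of $\pi_h$ together with inverse and trace inequalities $s_h(\pi_h\varphi,\pi_h\varphi)^{1/2}\lesssim h^{1/2}\|\nabla\varphi\|_\Omega$. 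Since Theorem~\ref{dual_stability} gives $\sup_{t\in I}\|\frakh^{1/2}\nabla\varphi\|_\Omega\lesssim C_T\|\tilde e\|_\frakh$, each factor carries either $h\frakh^{-1/2}=h^{1/2}(h/\frakh)^{1/2}$ (volume and data terms) or $h^{1/2}\frakh^{-1/2}=(h/\frakh)^{1/2}$ (jump and stabilization terms), matching precisely the $h^{1/2}$- and trace-weighted residuals in \eqref{aposteriori1} with the common prefactor $C_T(h/\frakh)^{1/2}$. The initial term is handled identically: $e(0)=u_0-\pi_h u_0$ and orthogonality give $(e(0),\varphi(0))=(u_0-\pi_h u_0,\varphi(0)-\pi_h\varphi(0))\lesssim\|u_0-\pi_h u_0\|_\Omega\, h\frakh^{-1/2}C_T\|\tilde e\|_\frakh$. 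Dividing through by $\|\tilde e\|_\frakh$ yields the claim.

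The main obstacle is not any single estimate but the careful accounting of the powers of $h$ and $\frakh$: one must verify that, after extracting the common factor $(h/\frakh)^{1/2}$, no inverse power of $\mu$ and no high Sobolev norm of $\varphi$ enters, so that only the gradient bound $\sup_{t\in I}\|\frakh^{1/2}\nabla\varphi\|_\Omega\lesssim C_T\|\tilde e\|_\frakh$ — which holds \emph{without} any convexity assumption — is used. In particular the quantity $s_h(\pi_h\varphi,\pi_h\varphi)$ must be controlled by $\|\nabla\varphi\|_\Omega$ alone rather than by $|\varphi|_{H^2(\Omega)}$, since no elliptic regularity is available in this case; this is exactly what keeps the estimate robust in the P\'eclet number and independent of the regularity of the exact solution.
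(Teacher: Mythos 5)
Your proposal is correct and follows essentially the same route as the paper's own proof: both start from the representation \eqref{error_repsmooth}, use Galerkin orthogonality with $\pi_h\varphi$ and $L^2$-orthogonality to eliminate the $\partial_t u_h$ term and generate the data-oscillation terms, insert an arbitrary $v_h$ in the convective residual, integrate by parts elementwise in the diffusive term to expose volume and jump residuals, and then bound each factor via projection/trace estimates and the dual bound $\sup_{t\in I}\|\frakh^{1/2}\nabla\varphi\|_\Omega\lesssim C_T\|\tilde e\|_\frakh$ of Theorem~\ref{dual_stability}, finishing by division by $\|\tilde e\|_\frakh$. Your accounting of the $h$ and $\frakh$ powers, the treatment of $s_h(u_h,\pi_h\varphi)$ via Cauchy--Schwarz, trace inequalities and $H^1$-stability of $\pi_h$, and the handling of the initial term all match the paper's argument.
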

\begin{proof}
Starting from \eqref{error_repsmooth} and using Galerkin orthogonality and the
orthogonality of the $L^2$-projection we have
\begin{equation}\label{error_rep_galortho}
\|\tilde e\|^2_{\frakh} =  (e(0),\varphi(0)-\pi_h \varphi)_\Omega +
(\partial_t e,\varphi-\pi_h \varphi)_Q+\int_I \left( a(e,\varphi-\pi_h
\varphi) +s_h(u_h,\pi_h\varphi)  \right) ~\mbox{d}t.
\end{equation}
Using the weak formulation \eqref{weakmodel} and the orthogonality of the
$L^2$-projection we may write
\begin{multline}
\|\tilde e\|^2_{\frakh} = 
(e(0),\varphi(0)-\pi_h \varphi(0))_\Omega + (\bfbeta \cdot \nabla u_h
- v_h,\pi_h \varphi- \varphi)_Q\\ + (\mu \nabla u_h,\nabla (\pi_h
\varphi -  \varphi))_Q +(f, \varphi - \pi_h \varphi)_Q +\int_I s_h(u_h,\pi_h\varphi) ~\mbox{d}t.
\end{multline}
Considering the right hand side term by term we get using
Cauchy-Schwarz inequality, approximation and the stability of Theorem \ref{dual_stability}
\begin{multline}\label{initial_cont}
(e(0),\varphi(0)-\pi_h \varphi(0))_\Omega \leq h \frakh^{-1/2}
\|e(0)\|_\Omega \sup_{t \in I} \|\frakh^{\frac12} \nabla \varphi\|_\Omega\\
\lesssim  C_T \left(\frac{h}{\frakh} \right)^{\frac12} h^{\frac12} \|e(0)\|_\Omega \|\tilde e\|_\frakh,
\end{multline}
\begin{multline}\label{conv_term}
 (\bfbeta \cdot \nabla u_h - v_h,\pi_h \varphi- \varphi)_Q 
\leq h^{\frac12} \frakh^{-1/2} \int_I \|h^{\frac12}(\bfbeta \cdot \nabla u_h -
v_h)\|_\Omega ~\mbox{d}t \sup_{t \in I}  \|\frakh^{\frac12} \nabla \varphi(\cdot,t)\|_\Omega \\
\lesssim C_T \left(\frac{h}{\frakh} \right)^{\frac12} 
\int_I \|h^{\frac12}(\bfbeta \cdot \nabla u_h -
v_h)\|_\Omega ~\mbox{d}t \|\tilde e\|_\frakh. 
\end{multline}
In the third term we first integrate by parts on each element and
then proceed with trace inequalities, followed by approximation for
the dual solution,
\begin{multline}\label{diff_cont_apost}
(f ,  \varphi - \pi_h \varphi)_Q -  (\mu \nabla u_h,\nabla (
\varphi -  \pi_h \varphi))_Q \\
\lesssim \underbrace{\int_I \left(\inf_{v_h \in V_h^0} \sum_{K\in
  \mathcal{T}_h} \|h^{\frac12}(f + \mu \Delta u_h - v_h)\|^2_K\right)^{\frac12} + \left(\sum_{F \in \mathcal{F}} \|\mu \jump{\nabla
u_h \cdot n_F}\|_F^2 \right)^{\frac12}~\mbox{d}t}_{\mathcal{R}(u_h,\mu)}\\
\times \left( \sup_{t \in I}\|h^{-1/2}(\pi_h \varphi -
  \varphi)(\cdot,t)\|_\Omega  + \sup_{t \in I} \|(\pi_h \varphi
- \varphi)(\cdot,t)\|_\mathcal{F} \right)\\
\lesssim \left(\frac{h}{\frakh}\right)^{\frac12} ~\mathcal{R}(u_h,\mu)
\sup_{t \in I} \|\frakh^{\frac12} \nabla
  \varphi(\cdot,t)\|_\Omega\\
\lesssim C_T \left(\frac{h}{\frakh}\right)^{\frac12} ~\mathcal{R}(u_h,\mu) \|\tilde e\|_\frakh.
\end{multline}
Finally the stabilization term is handled using a Cauchy-Schwarz
inequality, followed by a trace inequality and the $H^1$-stability of
the $L^2$-projection.
\begin{multline}\label{stabcont}
\int_I s_h(u_h,\pi_h\varphi) ~\mbox{d}t \lesssim 
\|\bfbeta\|^{\frac12}_{L^\infty(Q)} \int_I s(u_h,u_h)^{\frac12}
~\mbox{d}t  \,h^{\frac12} \sup_{t \in I} \|\nabla \varphi(\cdot,t)\|_\Omega \\
\\
\lesssim C_T \left(\frac{h}{\frakh}\right)^{\frac12}
\int_I s(u_h,u_h)^{\frac12}~\mbox{d}t   \|\tilde e\|_\frakh.
\end{multline}
The claim follows by collecting the upper bounds
\eqref{initial_cont}-\eqref{stabcont} and dividing by $\|\tilde e\|_{\frakh}$.
\end{proof}
\begin{theorem}(A priori error estimate)\label{thm:apriori}
{Let $\tilde e$ be defined by \eqref{weakerror}, with
  $u$ the solution of \eqref{weakmodel} and and $u_h$ the solution
\eqref{FEMSYMSTAB}, with $\gamma>0$.} Assume that $\mathrm{Pe}_h>1$ then there holds
\[
\|\tilde e\|_\frakh \lesssim C_T  \left(\frac{h}{\frakh}\right)^{\frac12} (1+h^{\frac12} + T^{\frac12}) \left(\int_0^T \|f\|_{\Omega} ~\mbox{d}t + \|u_0\|_\Omega\right).
\]
The constant $C_T$ is defined in Theorem \ref{forward_stability}.
\end{theorem}
\begin{proof}
The result follows from estimate \eqref{aposteriori1} by bounding all
the residual terms using Lemma \ref{stabilityCIP}. 
First observe that since the P\'eclet number is high we may use
inverse inequalities and trace inequalities to show that
\[
\mathcal{R}(u_h,\mu) \lesssim h^{\frac12} \int_0^T \|f\|_{\Omega}
~\mbox{d}t + T^{\frac12} \|\mu^{\frac12} \nabla
u_h\|_Q.
\]
For the contributions on the faces we have used
\begin{multline}
\int_I \|\mu  \jump{\nabla u_h \cdot n_F}\|_{\mathcal{F}} ~\mbox{d}t
 \lesssim
\|\bfbeta\|_{L^{\infty}(\Omega)}^{\frac12} h^{\frac12} T^{\frac12} \left(\int_I
  \sum_{F \in \mathcal{F}} \|\mu^{\frac12}  \jump{\nabla u_h \cdot n_F}\|^2_{F}~
 \mbox{d}t \right)^{\frac12}\\
\lesssim\|\mu^{\frac12}
 \nabla u_h\|_Q \lesssim \tnorm{u_h}_h.
\end{multline}
Similarly using a Cauchy-Schwarz inequality in time and the stability
\eqref{stabilityCIP} we have
\[
\int_I s(u_h,u_h)^{\frac12} ~\mbox{d}t \leq T^{\frac12}
 \int_I s(u_h,u_h) ~\mbox{d}t.
\]
We finally consider the first term on the right hand side of
\eqref{aposteriori1}. First note that
\[
\int_I \inf_{v_h \in V_h^0} \|h^{\frac12}(\bfbeta \cdot \nabla u_h -
v_h)\|_\Omega ~\mbox{d}t \leq
T^{\frac12}
 \inf_{v_h \in V_h^0} \|h^{\frac12}(\overline \bfbeta \cdot \nabla u_h -
v_h)\|_Q + T^{\frac12} \|h^{\frac12} \bfbeta' \cdot \nabla u_h\|_Q.
\]
By Lemma \ref{mod_interpol_strong} and Lemma \ref{sversuss} we have
\begin{multline*}
\inf_{v_h \in V_h^0} \|h^{\frac12}(\overline \bfbeta \cdot \nabla u_h -
v_h)\|_Q \lesssim  h^{\frac12} \|\overline \bfbeta\|_{W^{1,\infty}(\Omega)}
\|u_h\|_{Q} + \left(\int_I \bar s_h(u_h,u_h)~\mbox{d}t\right)^{\frac12}\\
\lesssim  h^{\frac12} \|\overline \bfbeta\|_{W^{1,\infty}(\Omega)}
\|u_h\|_{Q} + h^{\frac12} \|\mu^{\frac12} \nabla u_h\|_Q +  \left(\int_I
s_h(u_h,u_h) ~\mbox{d}t\right)^{\frac12}.
\end{multline*}
It follows that
\begin{multline}
\inf_{v_h \in V_h^0} \|h^{\frac12}(\overline \bfbeta \cdot \nabla u_h -
v_h)\|_Q \\
\lesssim \max(h^{\frac12} T^{\frac12} \|\overline
\bfbeta\|_{W^{1,\infty}(\Omega)},\|\bfbeta\|_{L^{\infty}(\Omega)}^{\frac12}) (\sup_{t \in I}
\|u_h(\cdot,t)\|_\Omega + \tnorm{u_h}_h).
\end{multline}
Using the assumption on the small scale fluctuations
$
\|\bfbeta'\|_{L^\infty(Q)}^2 \lesssim \mu 
$
we have 
\[
\|h^{\frac12} \bfbeta' \cdot \nabla u_h\|_Q \lesssim h^{\frac12} \|\mu^{\frac12}
\nabla u_h\|_Q \leq h^{\frac12} \tnorm{u_h}_h.
\]
We conclude by collecting terms and applying Lemma \ref{stabilityCIP}.
\end{proof}
{
\begin{remark}(The necessity of stabilization for robustness)
Note that the stability of the dual problem holds regardless of the
numerical method used. The stabilization in the numerical
method allows us to control the first residual in the a posteriori error
estimate, by using the discrete stability estimate
of Lemma \ref{stabilityCIP}. If no stabilization is present there is no
control of the streamline derivative making it impossible to obtain
uniformity in $\mu$. Another observation that is worthwhile is that
the above a priori error estimate is valid only for high mesh P\'eclet
number. This is because Theorem \ref{thm:apost} is optimal
only in this regime.
For low mesh P\'eclet number we may instead use the stability $|(\frakh\mu)^{\frac12}
\varphi|_{L^2(I;H^{\frac{3}{2}}(\Omega))} \leq \|\tilde e\|_{\frakh}$
in the various bounds above. We only detail how equation
\eqref{conv_term} and
\eqref{diff_cont_apost} are modified
\begin{multline}
(f - \beta \cdot \nabla u_h,  \varphi - \pi_h \varphi)_Q -  (\mu \nabla u_h,\nabla (
\varphi -  \pi_h \varphi))_Q \\
\lesssim \underbrace{\left(\int_I \inf_{v_h \in V_h^0} \sum_{K\in
  \mathcal{T}_h} \|h (f  - \beta \cdot \nabla u_h+ \mu \Delta u_h - v_h)\|^2_K+
\sum_{F \in \mathcal{F}} \|\mu h^{\frac12} \jump{\nabla
u_h \cdot n_F}\|_F^2 ~\mbox{d}t\right)^{\frac12}}_{\mathcal{R}(u_h,\mu)}\\
\times \left(\int_I \|h^{-1}(\pi_h \varphi -
\varphi)(\cdot,t)\|^2_\Omega+\|h^{-\frac12}(\pi_h \varphi
- \varphi)\|^2_\mathcal{F}  ~\mbox{d}t  \right)^{\frac12}\\
\lesssim \left(\frac{h^2 }{\frakh \mu}\right)^{\frac12} ~\mathcal{R}(u_h,\mu)
|(\mu\frakh)^{\frac12} 
  \varphi|_{L^2(I; H^2(\Omega))}\\
\lesssim C_T \left(\frac{h^2 }{\frakh \mu}\right)^{\frac12}  ~\mathcal{R}(u_h,\mu) \|\tilde e\|_\frakh.
\end{multline}
Finally
using this modified version of Theorem \ref{thm:apost}, an a priori
result with conclusion similar to that of
Theorem \ref{thm:apriori} holds for $u_h$ solution of \eqref{FEMSYMSTAB} with
$\gamma \ge 0$. The stabilization may be omitted when $\mathrm{Pe}_h <
1$.
\end{remark}}
%
\subsubsection{The degenerate case of unknown $\bfbeta'$}
In many relevant cases $\bfbeta'$ may be unknown or only partially
known. If the statistics of
$\bfbeta'$ are known some stochastic method may be used to recover
expectancy values for the solution. In this section we will consider the situation,
that $\bfbeta'$ is simply excluded from the
computation and we will show that under our assumptions on the small scale velocity
fluctuations the error estimates still hold for high mesh P\'eclet numbers. Indeed in the high
P\'eclet number regime the consistency error made by dropping the
fine scale fluctuations of $\bfbeta$ is smaller than the
discretization error.

Here we use an advective field $\overline \bfbeta$ that we assume is
divergence free and let $\bfbeta$ be replaced by $\overline \bfbeta$ in
\eqref{FEMSYMSTAB}.
\begin{theorem}\label{unknowndatathm}
Let $u$ be the solution of \eqref{weakmodel} and $u_h$ the solution of
\eqref{FEMSYMSTAB}, with $\overline \bfbeta$ instead of $\bfbeta$ for
the advective field and {$\gamma>0$}, assume that $\mathrm{Pe}_h>>1$, then
\[
\|\tilde e\|_\frakh \lesssim C_T \left( \frac{h}{\frakh}\right)^{\frac12} (1+h^{\frac12} + T^{\frac12}+T) \left(\int_0^T \|f\|_{\Omega} ~\mbox{d}t + \|u_0\|_\Omega\right).
\]
\end{theorem}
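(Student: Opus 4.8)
The plan is to repeat the duality argument of Theorems~\ref{thm:apost} and \ref{thm:apriori} almost verbatim, observing that the error representation \eqref{error_repsmooth} and the dual problem \eqref{dualproblem} both involve the \emph{full} velocity $\bfbeta$ and are therefore unaffected by the fact that the scheme now only uses $\overline\bfbeta$. The single point of departure enters when $\pi_h\varphi$ is used as a test function and the scheme is invoked. Since $u_h$ solves \eqref{FEMSYMSTAB} with $\overline\bfbeta$ and $\bar s_h$ replacing $\bfbeta$ and $s_h$, the advection--diffusion part of the scheme, evaluated at $(u_h,\pi_h\varphi)$, differs from $a(u,\pi_h\varphi)$ by $a(e,\pi_h\varphi)+(\bfbeta'\cdot\nabla u_h,\pi_h\varphi)$, because $\bfbeta\cdot\nabla u-\overline\bfbeta\cdot\nabla u_h=\bfbeta\cdot\nabla e+\bfbeta'\cdot\nabla u_h$. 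Hence exact Galerkin orthogonality is lost and the analogue of \eqref{error_rep_galortho} acquires, beyond the replacement of $s_h$ by $\bar s_h$, one extra nonconsistent term $-\int_I(\bfbeta'\cdot\nabla u_h,\pi_h\varphi)\,\mbox{d}t$.

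First I would dispose of all the terms already present in \eqref{aposteriori1}, which are handled exactly as in Theorem~\ref{thm:apost}: Cauchy--Schwarz in space and time, elementwise integration by parts for the diffusive residual, trace inequalities for the face jumps, and the dual stability of Theorem~\ref{dual_stability} to extract the factor $C_T(h/\frakh)^{1/2}$. The stabilization now appears as $\bar s_h$, which I would first reduce to $s_h$ via Lemma~\ref{sversuss} (so that $\bar s_h(u_h,u_h)\lesssim s_h(u_h,u_h)+h^{1/2}\|\mu^{1/2}\nabla u_h\|^2_\Omega$) and then control by the discrete stability estimate of Lemma~\ref{stabilityCIP}, which holds for the $\overline\bfbeta$-scheme since $\overline\bfbeta$ is likewise solenoidal and non-penetrating. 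The convection residual now contains only $\overline\bfbeta$, so Lemma~\ref{mod_interpol_strong} applies directly; bounding the remaining residuals against \eqref{stabilityCIP} exactly as in Theorem~\ref{thm:apriori} reproduces the factor $(1+h^{1/2}+T^{1/2})$.

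The real work is the new consistency term $\int_I(\bfbeta'\cdot\nabla u_h,\pi_h\varphi)\,\mbox{d}t$. The naive bound $\|\mu^{1/2}\nabla u_h\|\,\|\varphi\|$ is useless, since $\|\varphi\|\lesssim\frakh^{-1/2}\|\frakh^{1/2}\nabla\varphi\|$ carries $\frakh^{-1/2}$ but no power of $h$. Instead I would integrate by parts, which is legitimate because $\bfbeta$ and $\overline\bfbeta$ are both solenoidal with vanishing normal trace, hence so is $\bfbeta'=\bfbeta-\overline\bfbeta$; this yields $(\bfbeta'\cdot\nabla u_h,\pi_h\varphi)=-(u_h,\bfbeta'\cdot\nabla\pi_h\varphi)$. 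I then estimate $\le\|u_h\|_\Omega\,\|\bfbeta'\|_{L^\infty(\Omega)}\,\|\nabla\pi_h\varphi\|_\Omega$, use the $H^1$-stability of $\pi_h$ on quasi-uniform meshes to replace $\|\nabla\pi_h\varphi\|$ by $\|\nabla\varphi\|$, invoke the amplitude bound $\|\bfbeta'\|_{L^\infty(\Omega)}\lesssim\mu^{1/2}$, and, crucially, use $\mu^{1/2}\lesssim h^{1/2}$ from $\mathrm{Pe}_h\gg1$ to convert the spare $\mu^{1/2}$ into the missing $h^{1/2}$. Integrating in time with $\int_I\|u_h\|\,\mbox{d}t\le T\sup_t\|u_h\|$, and applying the discrete stability of $u_h$ together with $\sup_t\|\frakh^{1/2}\nabla\varphi\|\lesssim C_T\|\tilde e\|_\frakh$ from Theorem~\ref{dual_stability}, produces a bound of the form $C_T(h/\frakh)^{1/2}\,T\,(\int_I\|f\|\,\mbox{d}t+\|u_0\|)\,\|\tilde e\|_\frakh$, contributing precisely the extra factor $T$.

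Collecting the two contributions and dividing by $\|\tilde e\|_\frakh$ gives the polynomial factor $(1+h^{1/2}+T^{1/2}+T)$, the additional $T$ being exactly the price of dropping $\bfbeta'$. The main obstacle, and the step I would expect to need the most care, is this consistency term: one must avoid any inverse power of $\mu$, and it is the combination of the solenoidality of $\bfbeta'$ (permitting integration by parts onto the smoother dual variable $\varphi$ rather than onto $u_h$), the scale-separation amplitude bound $\|\bfbeta'\|^2_{L^\infty(\Omega)}\lesssim\mu$, and the high-Péclet relation $\mu\lesssim h$ that renders the term subcritical rather than dominant.
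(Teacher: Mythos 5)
Your proposal is correct and takes essentially the same approach as the paper: the terms already present in Theorem \ref{thm:apost} are handled as before (with $\bar s_h$ controlled via Lemma \ref{sversuss} and Lemma \ref{stabilityCIP}), and the new consistency term is integrated by parts using the solenoidality and vanishing normal trace of $\bfbeta'$, then bounded with $\|\bfbeta'\|_{L^\infty(Q)}\lesssim\mu^{\frac12}\lesssim h^{\frac12}$ (from $\mathrm{Pe}_h\gg1$) and the dual stability of Theorem \ref{dual_stability}, producing exactly the extra factor $T$. The only cosmetic difference is that you carry $\pi_h\varphi$ in the consistency term and hence invoke the $H^1$-stability of the $L^2$-projection, whereas the paper writes this term directly against $\varphi$; the two arrangements are equivalent up to a higher-order term.
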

\begin{proof}
We proceed as in the proofs of Theorems \ref{thm:apost} and \ref{thm:apriori}
\begin{multline}\label{betprime_rep_galortho}
\|\tilde e\|^2_{\frakh} =  (e(0),(\varphi-\pi_h \varphi)(0))_\Omega +
(\partial_t e,\varphi-\pi_h \varphi)_Q \\+\int_I \Bigl( a(e,\varphi-\pi_h
\varphi) +\bar s_h(u_h,\pi_h\varphi) +{ (\underbrace{\overline \bfbeta \cdot \nabla
u_h - \bfbeta \cdot \nabla u_h}_{ = -\bfbeta' \cdot \nabla u_h},
\pi_h \varphi)_\Omega} dt \Bigr).
\end{multline}
The only thing that differs from the previous analysis is the last
term in the right hand side. Except for this term the proof proceeds
as before. We will therefore here only show how this term may be bounded.
After an integration by parts we have
\begin{multline*}
 (\bfbeta' \cdot \nabla u_h, \pi_h \varphi)_Q = (u_h, \bfbeta' \cdot
 \nabla \pi_h \varphi)_Q \\
\leq \sup_{t \in I}\|u_h(\cdot,t)\|_\Omega T \|\bfbeta'\|_{L^\infty(Q)}
 \frakh^{-1/2} \sup_{t \in I} \|\frakh^{\frac12} \nabla \pi_h
 \varphi(\cdot,t)\|_\Omega\\
\lesssim \sup_{t \in I}\|u_h(\cdot,t)\|_\Omega T \|\bfbeta'\|_{L^\infty(Q)}
 \frakh^{-1/2} \sup_{t \in I} \|\frakh^{\frac12} \nabla
 \varphi(\cdot,t)\|_\Omega
.
\end{multline*}
Note that under our assumptions $\|\bfbeta'\|_{L^\infty(Q)} \sim
\mu^{\frac12} \leq \|\bfbeta\|^{\frac12}_{L^\infty(Q)} h^{\frac12}$ leading
to
\[
(\bfbeta' \cdot \nabla u_h, \pi_h \varphi)_Q \lesssim \left(\frac{h}{\frakh}
\right)^{\frac12}  T C_T \|\tilde e\|_\frakh.
\]
\end{proof}
It follows that the consistency error is of the same order as the
discretization error. If the P\'eclet number is large, the
contribution from the discretization error can be assumed to be
dominating and the same order of convergence as in the unperturbed
case should be observed, until
the P\'eclet number becomes so small that the inconsistency
dominates. This means that in the high P\'eclet regime, if data are
known to be rough, noise in the velocities satisying the constraint on
$\bfbeta'$ may be neglected.
\subsection{Conclusion}
We have derived robust a posteriori and a priori error estimates for
transient convection--diffusion equations. The upshot is that the
estimates are completely robust with respect to the P\'eclet number,
in the sense that we also control the Sobolev norms of the exact
solution in the error constant. The estimates allow for low regularity
data and multiscale advection, that may have strong spatial variation
on the fine scale under a special scale separation condition. 
The aim of this work was to take a first step towards an understanding of what 
transport problems are computable in the high P\'eclet regime, beyond the standard assumption of
smooth data. These results also set a baseline for what should be achieved
theoretically in the analysis of more involved methods, such as
multiscale methods, in order to claim that they produce an accuracy beyond what
is obtained using a standard stabilized finite element method.
\section*{Acknowledgment} The author wishes to thank Professor Vivette
Girault and Professor Alexandre Ern for helpful advice.

\end{document}